\newtheorem{theorem}{Theorem}[section]
\newtheorem{proposition}[theorem]{Proposition}
\newtheorem*{theorem*}{Theorem}
\theoremstyle{definition}
\newtheorem{definition}[theorem]{Definition}
\theoremstyle{remark}
\newtheorem{remark}[theorem]{Remark}
\newtheorem{example}[theorem]{Example}
\numberwithin{equation}{section}
\def \CC {{\mathbb{C}}}
\def \RR {{\mathbb{R}}}
\def \ZZ {{\mathbb{Z}}}
\def \EE {{\mathbb{E}}}
\def \cA {{\mathcal{A}}}
\newcommand{\re}{\operatorname{Re}}
\newcommand{\im}{\operatorname{Im}}
\title{Singularities on maxfaces constructed by node-opening}
\author{Hao Chen}
\address{Institute of Mathematical Sciences, ShanghaiTech University, 201210 Shanghai, China.}
\email{chenhao5@shanghaitech.edu.cn}
\author{Anu Dhochak}
\address{Department of Mathematics, Shiv Nadar Institute of Eminence, Deemed to be University, Dadri 201314, Uttar Pradesh, India.}
\email{ad404@snu.edu.in}
\author{Pradip Kumar}
\address{Department of Mathematics, Shiv Nadar Institute of Eminence, Deemed to be University, Dadri 201314, Uttar Pradesh, India.}
\email{pradip.kumar@snu.edu.in, pmishra.math@gmail.com}
\author{Sai Rasmi Ranjan Mohanty}
\address{Department of Mathematics, Shiv Nadar Institute of Eminence, Deemed to be University, Dadri 201314, Uttar Pradesh, India.}
\email{sm743@snu.edu.in}
\subjclass[2020]{53A35}
\keywords{embedded maxface, maximal map, maxface with more than three ends, zero mean curvature surfaces.}
\begin{document}

\maketitle
\begin{abstract}
  The node-opening technique, originally designed for constructing minimal
  surfaces, is adapted to construct a rich variety of new maxfaces of high
  genus that are embedded outside a compact set and have arbitrarily many
  catenoid or planar ends, thus removing the scarcity of examples of maxfaces.
  The surfaces look like spacelike planes connected by small necks.  Among the
  examples are maxfaces of the Costa--Hoffman--Meeks type.  Although very
  fruitful, the main challenge of this paper is not the construction itself,
  but the analysis of the positions and natures of singularities on these
  maxfaces.  More specifically, we conclude that the singular set form curves
  around the waists of the necks.  In generic and some symmetric cases, all but
  finitely many singularities are cuspidal edges, and the non-cuspidal
  singularities are swallowtails evenly distributed along the singular curves.
\end{abstract}

\section{Introduction}

Maximal surfaces are zero mean curvature immersions in the Lorentz-Minkowski
space $\EE_1^3$. These surfaces emerge as solutions to the variational problem
of locally maximizing the area among spacelike surfaces.  They share several
similarities with minimal surfaces in $\RR^3$.  For instance, both are critical
points of the area functional and both admit Weierstrass-Enneper
representations. However, while there are rich examples of complete minimal
surfaces, the only complete maximal immersion is the plane~\cite{umehara2006}.

It is then natural to allow singularities.  Following~\cite{imaizumi2008,
lopez2007, umehara2006}, etc., we adopt the term \emph{maximal map} for maximal
immersions with singularities. A maximal map is called a \emph{maxface} if its
singularities consist solely of points where the limiting tangent plane contains
a light-like vector~\cite{umehara2006}.  Umehara and Yamada also defined
completeness for maxfaces~\cite{umehara2006}. Complete non-planar maxfaces
always possess a compact singularity set.  At the singularities, a maxface
cannot be embedded, regardless of whether the rest of the surface is embedded.
Therefore, following \cite{fujimori2009, kim2006, umehara2006}, we adopt
embeddedness in wider sense as follows: 

\begin{definition}\label{def:embedded}
  A complete maxface is embedded in a wider sense if it is embedded outside of
  some compact subset. 
\end{definition}

Now that singularities are allowed, there are many examples of complete
maxfaces, such as the Lorentzian catenoid and Kim-Yang toroidal maxface
\cite{kim2006}. In 2006, Kim and Yang \cite{kim2006} constructed complete
maximal maps of genus $k\geq 1$.  When $k = 1$, it is a complete embedded (in a
wider sense) maxface known as Kim-Yang toroidal maxface.  When $k>1$, they are
not maxfaces.  In~\cite{fujimori2009}, the authors constructed a family of
complete maxfaces $f_k$ for $k \ge 1$ with two ends; but when $k>1$, these may
not be embedded (in a wider sense).  Moreover, in 2016, Fujimori, Mohamed, and
Pember~\cite{fujimori2016h} constructed maxfaces of any odd genus $g$ with two
complete ends (if $g=1$, the ends are embedded) and maxfaces of genus $g=1$ and
three complete embedded ends.  In 2024, the third author, along with Bardhan
and Biswas, proved the existence of higher genus maxfaces with one Enneper end
in \cite{BBP}.

To the best of our knowledge, all higher-genus maxfaces in the literature have
only two or three ends, usually of the catenoid type. Very few known
higher-genus maxfaces are embedded (in a wider sense). Open problems
in~\cite{fujimori2009} express the hope for a large collection of examples of
complete maxfaces that are embedded (in a wider sense), with higher genus and
many ends.

The scarcity of examples is surprising.  After all, the minimal surfaces and
maxfaces admit similar Weierstrass-Enneper representations.  But to construct
higher genus embedded (in a wider sense) maxfaces with the required type of
ends, it is usually not as direct as simply manipulating the Weierstrass data.
One challenge is the singularities:  While proposing the Weierstrass data, we
must ensure that the singular curve does not approach the ends and is compact.
Another challenge is the period problem, as illustrated in the following
example.

\begin{example}[See \cite{BBP}*{Section 1.1}]
	Consider the Costa minimal surface:
	\[
		M = \{(z, w) \in \mathbb{C}\times\mathbb{C} \cup \{(\infty, \infty)\} \mid w^2 = z(z^2-1)\} \setminus \{(0, 0), (\pm1, 0)\}
	\]
	with the data $\{\frac{a}{w}, \frac{2a}{z^2-1}\mathrm{d}z\}$, where $a \in
	\mathbb{R}^+ \setminus \{0\}$.  If there exists a companion maxface of the
	Costa surface, then its data should be $\{-\frac{i a}{w}, \frac{2i
	a}{z^2-1}\mathrm{d}z\}$ defined on $M$. Let $\tau$ be a one-sheeted loop
	around $(-1, 0)$ that does not contain $(1, 0)$. Then, \(\int_{\tau}
	\left(\frac{2i a}{z^2-1} \mathrm{d} z\right) = 2i a\), where \(a \neq 0\).
	Therefore, the period problem is not solved for the corresponding maxface. So
	the Costa surface does not have a companion maxface.  
\end{example}

\medskip

In this paper, we adapt the node-opening technique to construct a rich variety
of complete maxfaces of high genus that are embedded (in a wider sense) and
have an arbitrary number of spacelike ends, thus removing the scarcity of
examples.

The node-opening technique is a Weierstrass gluing method developed by
Traizet~\cite{traizet2002e}.  It constructs a family of surfaces depending on a
real parameter $t$.  The approach starts at $t=0$ with Weierstrass data defined
on a Riemann surface with nodes, then ``deforms'' to Riemann surfaces for $t>0$
by opening the nodes into necks and, at the same time, ``deforms'' the
corresponding Weierstrass data  using the Implicit Function Theorem.  The
node-opening technique has been very successful in constructing a rich variety
of minimal surfaces~\cite{traizet2002e}.  To the best of our knowledge, the
current paper marks the first application of the technique to surfaces in the
Lorentz-Minkowski space.

The Weierstrass gluing method has several advantages over other methods.  On
the one hand, in the existing literature on maxfaces, authors often need to
assume symmetries to make the construction possible, hence only produce
examples restricted to symmetries.  The gluing technique has been a very
powerful tool to break symmetries in minimal surfaces; in some sense, the
technique was developed for this purpose~\cite{traizet2002e}.  We will see
later that it is equally powerful in breaking symmetries for maximal surfaces,
hence ideal for removing the scarcity of examples. On the other hand, while the
PDE gluing method is also popular for constructing minimal surfaces, the
existence of singularities makes it difficult to be adapted for maxfaces.  More
specifically, one needs to identify (glue) two curves in the process, but the
analysis would be difficult if the curves contain singularities.  In the
Weierstrass gluing process, we instead identify (glue) two annuli; hence, we
can bypass the singularities.

The node-opening construction for maxfaces turns out to be very similar to that
for minimal surfaces, so we will only provide a sketch.  We will first give a
Weierstrass data in Section~\ref{sec:WeierstrassData}, leaving many parameters
to be determined later in Appendix~\ref{sec:ift} by solving the divisor problem
and the period problem using the Implicit Function Theorem.

This similarity implies that, for any minimal surface constructed by opening
nodes, there is a corresponding maxface also constructed by opening nodes.
This correspondence between maximal and minimal immersions is different from
the usual correspondence through Weierstrass data~\cite{umehara2006}.  In
Section~\ref{sec:example}, by simply comparing notes, we obtain a rich variety
of new maxfaces with high genus and arbitrarily many space-like ends, thereby
remove the scarcity of examples of maxfaces.  Among the examples are the
Lorentzian Costa and Costa--Hoffman--Meeks (CHM) surfaces and their
generalizations with arbitrarily many ends, providing positive answers to the
open problems in~\cite{fujimori2009}.  To the best of our knowledges, this is
the first time that Lorentzian analogues of CHM surfaces were constructed.

\begin{remark}\label{rem:periodic}
	One could also use the node-opening technique to glue catenoids into periodic
	maxfaces, even of infinite genus, just by mimicking~\cite{traizet2002r,
	traizet2008t, morabito2012, chen2021, chen2023}. We believe that many examples
	in the existing literature, e.g. Lorentzian Riemann examples and Schwarz P
	surfaces, etc, can be produced in this way~\cite{fujimori2009, lopez2000m}.
	However, we do not plan to implement such constructions.
\end{remark}

\medskip

We see the the node-opening construction itself is very fruitful, but
technically not that exciting.  In the current paper, most effort is devoted to
the more challenging task of analyzing the singularities on the constructed
maxfaces.

Complete non-planar maxfaces always appear with singularities, such as cuspidal
edges, swallowtails, cuspidal crosscaps, and cone-like singularities, to name a
few. We refer readers to~\cite{umehara2006, sai2022, kumar2020} to explore
various singularities on maxfaces.  The nature of singularities can be told
from the Weierstrass data.  However, for the maxfaces we construct, the
Weierstrass data is not given explicitly; rather, its existence is implied
using the Implicit Function Theorem.  Hence the analysis of the singularities
must also be performed implicitly, which is a challenging task.  Nevertheless,
we managed to perform the analysis in various situations.

To prepare for the analysis, we first recall in
Section~\ref{sec:Governingfunction} the governing functions whose derivatives
determine the nature of singularities.  Then in
Section~\ref{sec:Partialderivativeof A}, we perform an elaborate calculation of
the higher order derivatives of the governing function.  It can be seen as
generalizing Traizet's calculation~\cite{traizet2008t} of the first derivative
of the height differential, hence might have other use in similar node~opening
constructions.  These allow us to conclude various results about the nature of
singularities in Section~\ref{sec:natureofSIngularties}, which we summarize
below.

Components of the singular set are waists around the necks. In
Theorem~\ref{thm:noncuspidalvariety}, we prove that around a specific neck and
for sufficiently small, non-zero $t$, either the singular set is mapped to a
single point (cone-like singularity), or all but a finite number of singular
points are cuspidal edges. Moreover, the finitely many non-cuspidal
singularities are generalized $A_k$ singularities, their positions on the waist
depend analytically on $t$, and their types do not vary for sufficiently small
$t$. Then, in Proposition~\ref{prop:swallowtails}, we show that, generically,
there are four swallowtail singularities around a neck. The non-generic cases
are generally hard to study, but in Section~\ref{sec:symmetries}, we managed to
analyze the singularities in the presence of rotational and reflectional
symmetries. In Section~\ref{sec:example}, we will use the Lorentzian Costa and
Costa--Hoffman--Meeks surfaces to exemplify our results on singularities.

\medskip 

\subsection*{Acknowledgment}
The first and third authors would like to extend their sincere gratitude to
Professor S.\ D.\ Yang for graciously inviting them to The 3rd Conference on
Surfaces, Analysis, and Numerics at Korea University. They are truly grateful
for the opportunity, and our current work has commenced.

\section{Main results}\label{sec:mainresult}

\subsection{Node-opening construction}

We want to construct maxfaces that look like horizontal (spacelike) planes
connected by small necks.  For that, we consider $L$ horizontal planes, labeled
by integers $l \in [1, L]$.  We want $n_l > 0$ necks at level $l$, that is,
between the planes $l$ and $l+1$, $1 \le l < L$.  For convenience, we adopt the
convention that $n_0 = n_L = 0$, and write $N = \sum n_l$ for the total number
of necks.  Each neck is then labeled by a pair $(l,k)$ with $1 \le l < L$ and $1
\le k \le n_l$.

To each plane is associated a real number $Q_l$, indicating the logarithmic
growth of the catenoid ($Q_l \ne 0$) or planar ends ($Q_l = 0$).  To each neck
is associated a complex number $p_{l,k} \in \CC$ indicating its
horizontal limit position at $t=0$.  We write $p = (p_{l,k})_{1 \le l < L, 1
\le k \le n_l}$ and $Q=(Q_l)_{1 \le l < L}$.  The pair $(p, Q)$ is called a
\emph{configuration}.

Given a configuration $(p, Q)$, let $c_l$ be the real numbers that solve
\[
  Q_l = n_{l-1}c_{l-1} - n_l c_l, \qquad 1 \le l \le L,
\]
under the convention that $c_0 = c_L = 0$.  A summation over $l$ yields that
$\sum Q_l=0$, which is necessary for $c = (c_l)_{1 \le l \le L}$ to be uniquely
determined as a linear function of $Q$.  In fact, we may even replace $Q$ by
$c$ in the definition of a configuration.  Geometrically, $c_l$ corresponds to
the ``size'' of the necks at level $l$.  

\medskip

For the neck $(l,k)$ in a configuration, we define the force $F_{l,k}$ on the neck as 
\[
	F_{l,k}=
	\sum_{1 \le i \ne k \le n_l}\frac{2c_l^2}{p_{l,k}-p_{l,i}}-
	\sum_{i=1}^{n_{l+1}}\frac{c_lc_{l+1}}{p_{l,k}-p_{l+1,i}}-
	\sum_{i=1}^{n_{l-1}}\frac{c_l c_{l-1}}{p_{l,k}-p_{l-1,i}}.
\]
Note that we have necessarily $\sum F_{l,k} = 0$.

Alternatively, let
\[
  {\omega}_l =
  - \sum_{k = 1}^{n_l} \frac{c_l\; dz}{z-p_{l,k}}
  + \sum_{i = 1}^{n_{l-1}} \frac{c_{l-1}\;  dz}{z-p_{l-1,k}}
\]
be the unique meromorphic $1$-form on $\CC_l$ with simple poles at $p_{l,k}$
and $p_{l-1,k}$, respectively with residues $-c_l$ and $c_{l-1}$.  Then, the
force is given by 
\[
  F_{l, k} = \frac{1}{2}\operatorname{Res}_{p_{l,k}}\left(
    \frac{\omega_l^2}{dz} + \frac{\omega_{l+1}^2}{dz}
  \right).
\]

\begin{definition}
	A configuration is \emph{balanced} if $F_{l,k}=0$ for all $1 \le l < L$ and
	$1 \le k \le n_l$, and is \emph{rigid} if the differential of $F =
	(F_{l,k})_{1 \le l < L, 1 \le k \le n_l}$ with respect to $p$ has a complex
	rank of $n-2$.
\end{definition}

In fact, $n-2$ is the maximum possible rank.  To see this, note that the forces
$F$ are invariant under the translations and complex scalings of $p$.

A necessary condition for the balance is
\[
	W:=\sum_{l=1}^L \sum_{k=1}^{n_l} p_{l,k} F_{l,k}
	= \sum_{l=1}^{L-1}n_l(n_l-1)c_l^2-\sum_{l=1}^{L-2}n_ln_{l+1}c_lc_{l+1} = 0.
\]

We now state our first main result.

\begin{theorem}\label{thm:main}
	Let $(p,Q)$ be a balanced and rigid configuration such that the differential
	of $Q \mapsto W$ has rank $1$.  Then, for sufficiently small $t$, there is a
	smooth family $M_t$ of complete maxfaces with the following asymptotic
	behaviors as $t \to 0$
	\begin{itemize}
    \item The maxfaces are of genus $N-L+1$ with $L$ space like ends, whose
    	logarithmic growths converge to $Q_l$.

    \item After suitable scalings, the necks at level $l$ converge to
    	Lorentzian catenoids;

    \item $M_t$ scaled by $t$ converges to an $L$-sheeted space-like plane with
    	singular points at $p_{l,k}$.
	\end{itemize}
 	Moreover, $M_t$ is embedded in a wider sense for sufficiently small $t$ if
 	$Q_1 < Q_2 < \cdots < Q_L$. 
\end{theorem}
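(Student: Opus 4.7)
The plan is to adapt Traizet's Weierstrass gluing scheme \cite{traizet2002e} to the Lorentz--Minkowski setting. First I would construct the underlying Riemann surface $\Sigma_t$ by opening $N$ nodes: at $t=0$, $\Sigma_0$ is the disjoint union of $L$ copies of $\CC$ (the sheets $\CC_l$), with each marked point $p_{l,k}\in\CC_l$ identified with its partner on $\CC_{l+1}$. For $t\neq 0$ small, each node is replaced by an annular neck whose conformal modulus depends on $t$ and one additional gluing parameter; a standard Euler-characteristic count then gives $\Sigma_t$ genus $N-L+1$. On $\Sigma_t$ I would propose an ansatz for the maxface Weierstrass data $(G_t,\eta_t)$, analytic in $t$, that at $t=0$ restricts on each $\CC_l$ to $\eta_0=\omega_l$ and a Gauss-type map $G_0$ with $|G_0(p_{l,k})|=1$, i.e.\ the light-cone condition defining maxface singularities at the nodes. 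The free parameters are perturbations of $p_{l,k}$, the sizes $c_l$, the growths $Q_l$, and the gluing parameters.

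The core analytic step is the Implicit Function Theorem applied to the combined divisor and period system. The divisor conditions ensure that $G_t$ and $\eta_t$ have matched zeros and poles so that the three component $1$-forms of the maxface representation are globally well-defined on $\Sigma_t$, while the period conditions demand that their real-part integrals close up on a basis of homology cycles of $\Sigma_t$. The horizontal periods around the small loops encircling each neck are, to leading order in $t$, proportional to $\operatorname{Res}_{p_{l,k}}(\omega_l^2/dz+\omega_{l+1}^2/dz)=2F_{l,k}$, so the \emph{balance} hypothesis $F_{l,k}=0$ ensures the system is satisfied at $t=0$. The \emph{rigidity} hypothesis, together with the translation and complex-scaling invariances of $F$, guarantees that the differential of $F$ in $p$ has the maximal rank $n-2$. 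The single remaining horizontal constraint is governed by the scalar $W$, and the rank-$1$ hypothesis on $Q\mapsto W$ supplies the one extra degree of freedom in the $Q$-direction needed to kill it. Vertical periods are then handled by adjusting the $c_l$'s and gluing parameters. Assembling these pieces produces a square, invertible Jacobian at $t=0$, and the IFT yields analytic families $p(t), c(t), Q(t)$ and Weierstrass data $(G_t,\eta_t)$ solving all the conditions.

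Given the family $M_t$, the three asymptotic assertions follow directly by expanding the ansatz as $t\to 0$: the sheets $\CC_l$ account for the $L$ spacelike ends with logarithmic growths $Q_l$; zooming in on each neck by its natural conformal scale recovers the Lorentzian catenoid; and the global rescaling by $t$ collapses the necks and flattens $M_t$ onto an $L$-sheeted plane with marked singular points at $p_{l,k}$. \emph{The main obstacle specific to the Lorentzian setting}, with no counterpart in Traizet's original work, is to confirm that $M_t$ is a maxface, i.e.\ that its singular set $\{|G_t|=1\}$ is compact and disjoint from the ends. Since $G_0$ is identically $0$ or $\infty$ on the interior of each sheet, the equation $|G_t|=1$ can only be satisfied in a narrow collar around each neck waist for small $t$; quantifying this by a comparison of $G_t$ on the gluing annuli with the Gauss map of the Lorentzian catenoid confines the singular set to $N$ compact curves and sets up the refined singularity analysis pursued in Sections~\ref{sec:Governingfunction}--\ref{sec:natureofSIngularties}.

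Finally, for embeddedness in the wider sense under $Q_1<\cdots<Q_L$, outside a large compact set each end is graphical over its asymptotic plane at height $Q_l\log|z|+O(1)$, and the strict ordering of growths forces these asymptotic planes to be vertically disjoint at infinity. The necks and all non-graphical behaviour lie inside the compact set, so embeddedness reduces to the elementary comparison used in the minimal case \cite{traizet2002e}.
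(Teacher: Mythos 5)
Your proposal follows the same overall scheme as the paper: open the nodes, posit Weierstrass data with free parameters, solve the divisor and period problems by the Implicit Function Theorem using balance, rigidity, and the rank-$1$ condition on $Q\mapsto W$ exactly where you use them, then read off the asymptotics and deduce embeddedness from the ordering $Q_1<\cdots<Q_L$. However, the core analytic step fails as you state it. You claim the parameters and data can be found \emph{analytic in $t$} from ``a square, invertible Jacobian at $t=0$.'' The period map is not analytic, indeed not even differentiable, in $t$ at $t=0$: once a node is opened, the periods of $dh$ along the cycles $\Gamma_{l,k}$ passing through the necks (and hence the vertical separation of adjacent sheets) diverge like $\log t$ --- this is precisely why the ends have logarithmic growth and why the paper obtains $X_3(0_k)-X_3(0_{k+1})=O(-\log t)$. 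No Implicit Function Theorem can be applied directly in the parameter $t$. The paper, following \cite{traizet2002e}, substitutes $t=\exp(-1/\tau^2)$, shows that quantities such as $-\tau^{-2}\int_{\Gamma_{l,k}}dh$ and $t^{\pm1}P(\cdot)$ extend smoothly to $\tau=0$, and applies the IFT in $\tau$, sequentially (divisor condition for $\alpha,\beta$; vertical periods for $r$; horizontal periods for $a,b,R$); the resulting family is only smooth, which is all the theorem claims. Your argument is repairable by this standard device, but as written this step is a genuine gap, not a formality.

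Two smaller points where your sketch diverges from what actually happens. First, the Lorentzian horizontal period condition is $\overline{\int_\gamma g^{-1}dh}+\int_\gamma g\,dh=0$, with a plus sign where the minimal case has a minus; the paper compensates by placing the nodes on alternating sheets at \emph{conjugate} positions, $a_{l,k}=\overline{b_{l,k}}$ with initial value $\overline{p_{l,k}}$ for $l$ odd, and only after this conjugation does the limit of $t^{-1}P(\gamma_{l,k})$ reduce to the same force $F_{l,k}$ so that balance closes the periods. Your sketch treats the horizontal period problem as literally identical to the minimal one and misses this sign bookkeeping, which is the actual Lorentzian modification in the IFT step. Second, the confinement of the singular set, which you present as the main Lorentzian obstacle requiring comparison estimates against the catenoid Gauss map, is in the paper immediate from the choice of local coordinate $v_{l,k}=1/g_l$: in that coordinate $g=t/v$ \emph{exactly}, so the singular set is exactly the circles $|v_{l,k}|=t$, compact and disjoint from the $\Omega_l$; this exactness (rather than an estimate) is also what makes the subsequent singularity analysis tractable. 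Also note that your assertion $|G_0(p_{l,k})|=1$ at $t=0$ is inconsistent with your later (correct) statement that $G_0$ is identically $0$ or $\infty$ on each sheet; the light-cone condition only concentrates at the nodes in the limit.
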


The balance and non-degeneracy conditions for maxfaces turn out to be exactly
the same to those for minimal surfaces, and many balanced configurations have
been found when constructing minimal surfaces.  So we obtain a rich variety of
new maxfaces simply by comparing notes.  Some of the examples are listed in
Section~\ref{sec:example}.  Thereby we remove the scarcity of examples of
maxfaces.

Although very fruitful, the construction (proof of Theorem \ref{thm:main}) is straightforward.
In particular, the proof is very similar to that for minimal
surfaces~\cite{traizet2002e}, with only slight modifications. So we will only
provide an sktech. In Section~\ref{sec:WeierstrassData}, we will give a
Weierstrass data with undetermined parameters.  In Appendix~\ref{sec:ift}, we
will sketch the use of Implicit Function Theorem to find parameters that solve
the divisor problem and the period problem.

\subsection{Singularities}

Let us first define
\begin{equation}\label{eqn:R}
  R^{(r)}_{l,k}(\theta) = \begin{dcases} 
    \im \bigg(
      e^{(r+1) i \theta} 
      \overline{\operatorname{Res}_{p_{l,k}}\frac{\omega_l^{r+2}}{dz}}
      - e^{-(r+1) i \theta}
      \operatorname{Res}_{p_{l,k}}\frac{\omega_{l+1}^{r+2}}{dz}
    \bigg), & l \text{ odd},\\
    \im \bigg(
      e^{(r+1) i \theta} 
      \operatorname{Res}_{p_{l,k}}\frac{\omega_l^{r+2}}{dz}
      - e^{-(r+1) i \theta}
      \overline{\operatorname{Res}_{p_{l,k}}\frac{\omega_{l+1}^{r+2}}{dz}}
    \bigg), & l \text{ even}.
  \end{dcases}
\end{equation}
Our main result about singularities are summarized below.

\begin{theorem}
	On a maxfaces constructed above, for sufficiently small non-zero $t$,
  \begin{itemize}
    \item The singular set has $N$ singular components, each being a curve
    	around the waist of a neck.

    \item The singularities are all nondegenerate.

    \item If the singular curve is not mapped to a single point (cone-like
    	singularity), then all but finitely many singular points are cuspidal
    	edges.

    \item The non-cuspidal singularities are generalized $A_k$ singularities,
    	their positions vary analytically with $t$, and their types do not vary.

    \item If $R^{(1)}_{l,k} \ne 0$, then there are exactly four non-cuspidal
    	singularities around the neck $(l,k)$, and they are all swallowtails.
    	Moreover, they tend to be evenly distributed on the waist as $t \to 0$.
  \end{itemize}
\end{theorem}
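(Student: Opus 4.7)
The plan is to prove the five bullets in order, building on the governing-function framework of Section~\ref{sec:Governingfunction} and the two intermediate results already forecast in the introduction: Theorem~\ref{thm:noncuspidalvariety} for bullets~3 and~4, and Proposition~\ref{prop:swallowtails} for bullet~5.

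For the first two bullets I would locate the singular set directly from the Weierstrass data. A maxface has singularities exactly where $|g|=1$ for the Gauss-map part $g$; in the node-opened model each neck $(l,k)$ is biholomorphic to an annulus on which $g$ extends holomorphically with $|g|\to 0$ on one boundary component and $|g|\to\infty$ on the other. Continuity in $t$, combined with Rouch\'e-type control on the level sets of $\log|g|$, forces $\{|g|=1\}$ inside the neck to be a single closed curve wrapping the waist for small $t \neq 0$, giving exactly $N$ smooth components (bullet~1); since $g$ is non-constant there, $d|g|\neq 0$ along the curve, yielding the nondegeneracy of bullet~2.

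For bullets~3 and~4, I would invoke the higher-derivative calculation of Section~\ref{sec:Partialderivativeof A}, which identifies the $r$-th $\theta$-derivative of the governing function on the waist of neck $(l,k)$ with (a nonzero multiple of) the residue expression $R^{(r)}_{l,k}(\theta)$ from~\eqref{eqn:R}, generalizing Traizet~\cite{traizet2008t}. The standard singularity criteria then say: a singular point is a cuspidal edge when the governing function is regular there, and a generalized $A_k$ singularity at a $k$-fold zero. If every $R^{(r)}_{l,k}\equiv 0$ the entire waist collapses to a cone-like point; otherwise the first nonvanishing $R^{(r)}_{l,k}$ is a nonzero trigonometric polynomial on the circle and hence has only finitely many zeros, which are precisely the non-cuspidal points. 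Analytic dependence of the zero positions on $t$ and stability of the resulting $A_k$-type under small perturbation of $t$ both inherit from the Implicit Function Theorem argument of Appendix~\ref{sec:ift}.

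For bullet~5, direct expansion of~\eqref{eqn:R} rewrites $R^{(1)}_{l,k}(\theta) = \alpha\sin(2\theta)+\beta\cos(2\theta)$ for real $\alpha,\beta$ determined by the residues at $p_{l,k}$. When $R^{(1)}_{l,k}\not\equiv 0$, this sinusoid has exactly four simple zeros on $[0,2\pi)$, uniformly spaced by $\pi/2$; simplicity is precisely the swallowtail criterion (ruling out higher $A_k$), and the uniform spacing --- independent of $t$ at leading order --- gives the even-distribution assertion as $t\to 0$. The hard part, I expect, is the identification in Section~\ref{sec:Partialderivativeof A}: the governing function is known only implicitly via the Weierstrass parameters produced by the Implicit Function Theorem, so extracting its Taylor coefficients at $t=0$ as the clean residue quantities $R^{(r)}_{l,k}$ demands a careful residue calculus generalizing~\cite{traizet2008t}. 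Once that expansion is in hand, everything else reduces to standard singularity classification and an elementary zero count for trigonometric polynomials.
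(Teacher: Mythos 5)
There is a genuine gap, and it sits exactly where you locate the ``hard part.'' Your bullets 3--5 all rest on the claim that the first non-vanishing Taylor coefficient of $\im\cA$ in $t$ is (a nonzero multiple of) some $R^{(r)}_{l,k}$ from \eqref{eqn:R}. That identification is not what the derivative calculation of Section~\ref{sec:Partialderivativeof A} yields, and it is false in general. Formula \eqref{eq:dkAdtk} expresses $\frac{1}{m!}\partial_t^m\cA(0,\theta)$ in terms of limits of \emph{$t$-derivatives of the Laurent coefficients} $a_n(t), b_n(t)$, not in terms of residues at $t=0$ alone. Only for $m=2$ does it collapse to $R^{(1)}_{l,k}$, and even that requires two inputs your proposal never uses: the balance condition, which forces $\overline{a_0}+b_0=F_{l,k}/c_l=0$ and hence kills the first-order term $\partial_t\im\cA(0,\theta)$; and the evenness of $dh_t$ in $t$, which kills the term $\lim_{t\to 0}\partial_t(a_0e^{i\theta}-b_0e^{-i\theta})$ inside the second derivative. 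For $m\ge 3$ the uncontrolled derivatives $\partial_t^j a_n$, $j\ge 1$, enter, and the paper states explicitly that ``we don't have control over even-order derivatives of the $a_n$ and $b_n$.'' Consequently your dichotomy --- all $R^{(r)}\equiv 0$ gives cone-like, otherwise the first nonvanishing $R^{(r)}$ is a trigonometric polynomial whose finitely many zeros are the non-cuspidal points --- is not valid, and your proof of bullets 3 and 4 collapses outside the cases $R^{(1)}\not\equiv 0$ (Proposition~\ref{prop:swallowtails}) or rotational symmetry (Proposition~\ref{prop:rotations}). What the paper actually uses for bullets 3--4 is an idea absent from your proposal: $\cA$ extends \emph{real-analytically} to $t=0$ (via holomorphic dependence on the node-opening parameter $s$), so the non-cuspidal set is the zero locus of a real-analytic function of $(t,\theta)$; either $\im\cA\equiv 0$ (cone-like), or Lojasiewicz stratification shows that near the trivial locus $t=0$ the varieties $\mathcal{Z}_k$ consist of finitely many analytic curves with no $0$-strata, which simultaneously gives finiteness, analytic dependence of positions, and constancy of type (Theorem~\ref{thm:noncuspidalvariety}). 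Your appeal to ``the Implicit Function Theorem argument of Appendix~\ref{sec:ift}'' for these points is misdirected: that IFT solves the divisor and period problems for the Weierstrass parameters and says nothing about zeros of $\im\cA$; an IFT on $\widetilde\cA$ works only at \emph{simple} zeros of the leading coefficient, which is precisely what cannot be guaranteed in general.

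Two smaller issues. For bullet 1, no Rouch\'e-type argument is needed (and yours would need care, since level sets of $\log|g|$ can be singular): the paper chooses the local coordinates to be $v_{l,k}=1/g_l$, so $g=t/v_{l,k}$ exactly and the singular set is exactly the round circle $|v_{l,k}|=t$. For bullet 2, nondegeneracy (together with frontness, which you need for the cuspidal-edge/swallowtail classification) is the criterion $\re(1/\cA)\ne 0$, a joint condition on $g$ \emph{and} $dh$; the implication ``$g$ non-constant $\Rightarrow d|g|\ne 0$'' is not valid in general, and $d|g|\ne 0$ alone is not the criterion. The paper gets bullet 2 in one line from $\cA(t,\theta)\to -c_l\neq 0$, which is again the real-analytic extension of $\cA$ to $t=0$ doing the work.
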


The results above cover the generic situations.  The non-generic cases are hard
to analyze.  However, if the configuration has symmetries, we have the
following results:

\begin{enumerate}
  \item Assume that the configuration has rotational symmetry of order $r>1$
   	around a neck and $R^{(r-1)}_{l,k} \ne 0$, then for sufficiently small
   	non-zero $t$, there are $2r$ swallowtail singularities around the neck, and
   	they tend to be evenly distributed as $t \to 0$.

  \item Assume that the configuration has a vertical reflection plane that cuts
  	through a neck, then the singularity around the neck that is fixed by the
  	reflection is non-cuspidal.

  \item Assume that the configuration of necks has a horizontal reflection plane
  	that cuts through a neck, then the singular curve around the neck is mapped to
  	a conelike singularity.
\end{enumerate}

We will demonstrate these situations by examples in the next section.

\begin{remark}
  Because the singularities are all non-degenerate for sufficiently small $t$,
  we do not have any cuspidal cross caps.  However, if we glue Lorentzian
  helicoids into maxfaces (see~\cite{traizet2005, freese2022, chen2022} for
  constructions of minimal surfaces), then by the duality between swallowtails
  and cuspidal cross caps~\cite{fujimori2008},  we would expect no
  swallowtails but only cuspidal cross caps.
\end{remark}

\section{Examples}\label{sec:example}

\subsection{Configurations from minimal surfaces}

Note that the balance and non-degeneracy conditions are exactly the same for
maxfaces and for minimal surfaces.  So all the configurations found
in~\cite{traizet2002e} that give rise to the minimal surface also give rise to
maxfaces.  We now summarize some balanced and non-degenerate configurations (or
methods to produce configurations) from~\cite{traizet2002e}, and the
corresponding maxfaces.

\begin{itemize}
  \item The simplest configuration would have a single neck, given by
    \begin{gather*}
      L=2,\quad n_1=1,\quad p_{1,1}=0,\\
      Q_1 = -1,\quad Q_2 = 1,\quad (\text{so } c_1 = 1).
    \end{gather*}
    The corresponding maxface is the Lorentzian catenoid.  It is of genus $0$,
    has two spacelike ends.

  \item The Costa--Hoffman--Meeks (CHM) configurations are given by
    \begin{gather*}
      L=3, \quad n_1 = 1,\quad n_2 = m,\\
      p_{1,1} = 0,\quad p_{2, m}=e^{2k\pi i/m},\quad 1 \le k \le m,\\
      Q_1=1-m,\quad Q_2=-1,\quad Q_3=m\quad (\text{so } c_1=m-1,\quad c_2=1).
    \end{gather*}
    We call the corresponding maxfaces Lorentzian Costa ($m=2$) or
    Costa--Hoffman--Meeks (CHM) surfaces ($m>2$). They provide positive answers
    to Problem 1 in~\cite{fujimori2009}.
    \begin{theorem}
      For each $g > 1$, there exist complete embedded (in a wider sense)
      maxfaces with $3$ spacelike ends and genus $g$.
    \end{theorem}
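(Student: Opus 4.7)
The plan is to apply Theorem~\ref{thm:main} directly to the Costa--Hoffman--Meeks configuration stated just above, with a suitable choice of $m$. Since $L = 3$ and $N = n_1 + n_2 = 1 + m$, the theorem produces maxfaces of genus $N - L + 1 = m - 1$ with three spacelike ends, so for a prescribed $g > 1$ the choice $m = g + 1 \ge 3$ gives the desired genus $g$. The embeddedness-in-wider-sense clause requires $Q_1 < Q_2 < Q_3$, i.e.\ $1 - m < -1 < m$, which is exactly equivalent to $m > 2$ and hence is automatic here.

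What remains is to verify the three hypotheses of Theorem~\ref{thm:main}: balance, rigidity, and that the differential of $Q \mapsto W$ has rank one. Balance is a direct computation. At $p_{1,1} = 0$ only one sum contributes, and
\[
  F_{1,1} = c_1 c_2 \sum_{i=1}^{m} e^{-2\pi i \, i/m} = 0
\]
since the sum of all $m$-th roots of unity vanishes for $m \ge 2$. At $p_{2,k} = e^{2\pi i k/m}$, the classical identity $\sum_{j=1}^{m-1}(1 - e^{2\pi i j/m})^{-1} = (m-1)/2$ reduces the force to
\[
  F_{2,k} = e^{-2\pi i k/m}\, c_2 \bigl((m-1) c_2 - c_1\bigr),
\]
which vanishes because $(c_1, c_2) = (m-1, 1)$ was chosen precisely so that the bracket is zero.

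For the rank-one condition, the equations $Q_l = n_{l-1} c_{l-1} - n_l c_l$ give $c_1 = -Q_1$ and $c_2 = Q_3 / m$. Substituting into the formula for $W$ yields
\[
  W = m(m-1)\, c_2^2 - m\, c_1 c_2 = \tfrac{m-1}{m}\, Q_3^2 + Q_1 Q_3,
\]
so $\partial W/\partial Q_1 = Q_3 = m \ne 0$ at the CHM configuration. Hence the differential of $Q \mapsto W$ is nonzero, i.e.\ of rank one.

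The main obstacle is rigidity: showing that the differential of the force map $F$ has the maximal possible complex rank $N - 2 = m - 1$. This is the nontrivial symmetry-breaking input and I would not attempt to re-prove it from scratch. Instead, as emphasized in the introduction, the balance and non-degeneracy conditions for maxfaces are \emph{identical} to those for minimal surfaces, so one can invoke the verification of rigidity for the CHM configuration already carried out by Traizet in~\cite{traizet2002e}. Once this is quoted, all hypotheses of Theorem~\ref{thm:main} are in place, and the theorem yields the claimed family of complete, embedded-in-wider-sense maxfaces of genus $g$ with three spacelike ends.
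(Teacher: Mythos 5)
Your proposal is correct and takes essentially the same route as the paper: both apply Theorem~\ref{thm:main} to the CHM configuration with $m = g+1$, check the genus count $N-L+1 = m-1$ and the embeddedness condition $Q_1 < Q_2 < Q_3$, and quote Traizet~\cite{traizet2002e} for the configuration's non-degeneracy, since the balance and rigidity conditions coincide with those for minimal surfaces. Your explicit computations of the forces $F_{1,1}$, $F_{2,k}$ and of $\partial W/\partial Q_1 = Q_3 = m \ne 0$ are correct and merely fill in details the paper leaves to the citation.
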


  \item Dihedral configurations with arbitrary number of ends were explicitly
  	constructed in~\cite{traizet2008n}.  They are given by $n_1 = 1$ and $n_2 =
  	\ldots n_{L-1} = m$, subject to a dihedral symmetry of order $m$.  These
  	configurations are balanced, and non-degenerate for a generic choice of
  	$c_l$.  The embeddedness condition $Q_1 < \cdots < Q_L$ is satisfied if $m
  	> 2(L-2)$.  Taking $m = 2L-3$, we obtain generalizations of CHM maxfaces
  	that provide positive answer to Problem 2 in~\cite{fujimori2009}.
    \begin{theorem}
      For each $L > 3$, there exist complete embedded (in a wider sense)
      maxfaces with $L$ spacelike ends and genus $2(L-2)^2$.
    \end{theorem}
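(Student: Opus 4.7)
The plan is to apply Theorem~\ref{thm:main} to the dihedral configurations recalled just above, specialized to $m = 2L - 3$. Balance and generic non-degeneracy of these configurations are already established in~\cite{traizet2008n}; what remains is to verify the embeddedness condition $Q_1 < \cdots < Q_L$, the rank condition on the differential $dW$, and to carry out the genus count predicted by Theorem~\ref{thm:main}.

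First, since $m = 2L-3 > 2(L-2)$, the bullet item preceding the theorem guarantees that the parameters $c_l$ (hence $Q_l$) can be chosen so that $Q_1 < Q_2 < \cdots < Q_L$, and the last clause of Theorem~\ref{thm:main} then gives embeddedness in the wider sense for sufficiently small $t$. Next, the total number of necks is
\[
  N \;=\; n_1 + (L-2)m \;=\; 1 + (L-2)(2L-3),
\]
so Theorem~\ref{thm:main} predicts
\[
  \text{genus} \;=\; N - L + 1 \;=\; (L-2)(2L-3) - (L-2) \;=\; (L-2)(2L-4) \;=\; 2(L-2)^2,
\]
as claimed. Finally, for the rank condition I would note that $W$ is a scalar quadratic form in $c$ (equivalently in $Q$ on the hyperplane $\sum_l Q_l = 0$), so its differential has rank at most $1$, and rank exactly $1$ is equivalent to $dW \ne 0$; this is an open condition that fails only on the critical locus of the quadratic form.

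The main obstacle I anticipate is showing that the three open conditions, non-degeneracy (rigidity) from~\cite{traizet2008n}, the strict monotonicity $Q_1 < \cdots < Q_L$, and $dW \ne 0$, can be met simultaneously. Each cuts out a non-empty open subset of the parameter space for $c$, so their intersection is non-empty as soon as one can exhibit a single compatible $c$; concretely, one may start from a configuration in the generic non-degeneracy locus of~\cite{traizet2008n} and then perturb $c$ within that open set to enforce both the monotonicity of $Q_l$ and the non-vanishing of $dW$. Once such parameters are fixed, Theorem~\ref{thm:main} immediately yields a family $M_t$ of complete maxfaces with $L$ spacelike ends, genus $2(L-2)^2$, and embedded in the wider sense for $t$ sufficiently small.
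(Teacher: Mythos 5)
Your proposal is correct and follows essentially the same route as the paper: the paper likewise takes Traizet's dihedral configurations with $n_1 = 1$, $n_2 = \cdots = n_{L-1} = m = 2L-3$, cites their balance and generic non-degeneracy, uses $m > 2(L-2)$ to secure the embeddedness condition $Q_1 < \cdots < Q_L$, and applies Theorem~\ref{thm:main} with the identical genus count $N - L + 1 = (L-2)(2L-4) = 2(L-2)^2$. Your extra verification of the rank-$1$ condition on the differential of $Q \mapsto W$ and of the simultaneous satisfiability of the open conditions merely fills in details that the paper leaves implicit.
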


  \item Numerical examples can be obtained by the polynomial method.  More
   	specifically, let
    \[
     	P_l = \prod_{k=1}^{n_l} (z-p_{l,k}), \quad P = \prod_{l=1}^{L-1} P_l,
    \]
    then the configuration is balanced if
    \[
      \sum_{l=1}^{L-1} c_l^2 P \frac{P''_l}{P_l} - \sum_{l=1}^{L-2} c_l c_{l+1} P \frac{P'_l P'_{l+1}}{P_l P_{l+1}} \equiv 0.
    \]
    If a polynomial solution to this differential equation has only simple
    roots, then the roots correspond to the positions of nodes (up to
    permutations).

  \item Implicit examples can be obtained by perturbing ``singular''
   	configurations.

    More specifically, consider a partition $I_1, ..., I_m$ of the nodes and a
    family of configurations given by $p_{l,k}^\lambda = \hat p_\mu +
    \lambda_{\mu} \tilde p_{l,k,\mu}$ when $(l,k) \in I_\mu$.  Then, the limit
    configuration $p^0$ is singular.  A force can be defined for the limit
    configuration in terms of $\hat p$ and the partition.  For each $\mu$,
    $\tilde p_{l,k,\mu}$ form a subconfiguration $\tilde p_\mu$.

    In the backward direction, Traizet~\cite{traizet2002e} found
    sufficient conditions to recover configuration $p^\lambda$ from the limit
    configuration $\hat p$ and the sub-configurations $\tilde p_\mu$.  In
    particular, the limit configuration and all sub-configurations should be
    balanced.  This result was used to construct examples with no symmetry.
    Using exactly the same configuration, we also obtain
    \begin{theorem}
      There exist complete embedded (in a wider sense) maxfaces with no
      nontrivial symmetries.
    \end{theorem}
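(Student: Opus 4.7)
The plan is to invoke the main construction Theorem~\ref{thm:main} with a configuration $(p, Q)$ supplied by Traizet's perturbation-of-singular-configurations method \cite{traizet2002e}. The key observation is that the balance equation $F_{l,k} = 0$, the rigidity condition on $dF$, and the rank-one condition on the differential of $Q \mapsto W$ are phrased purely in terms of the meromorphic $1$-forms $\omega_l$, hence coincide with those already used by Traizet in the minimal-surface setting. So any balanced, rigid configuration Traizet constructs is admissible input for Theorem~\ref{thm:main}, and the existence step amounts to a literal transfer from the minimal-surface case.

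First I would fix such a Traizet configuration: choose the limit positions $\hat p$, the partition of the nodes, and the sub-configurations $\tilde p_\mu$ so that the limit configuration and all sub-configurations are balanced and non-degenerate (Traizet's sufficient condition for recovering $p^\lambda$), and so that a generic choice leaves the full configuration without any nontrivial horizontal isometry permuting each level's positions. I would then choose the $c_l$ (equivalently $Q$) so that $Q_1 < Q_2 < \cdots < Q_L$; this is an open linear condition on $c$, compatible with the genericity needed for asymmetry. Applying Theorem~\ref{thm:main} then yields a smooth family $M_t$ of complete maxfaces of genus $N - L + 1$ with $L$ spacelike ends that are embedded in the wider sense for all sufficiently small $t$.

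The main obstacle is to lift the asymmetry of the configuration to asymmetry of $M_t$ itself. The argument I would use runs as follows. Any Lorentz isometry $\sigma$ of $\EE_1^3$ preserving $M_t$ must permute the $L$ spacelike ends; since their logarithmic growths converge to the pairwise distinct values $Q_l$, for sufficiently small $t$ every such $\sigma$ must fix each end individually and hence preserve every horizontal level. The third asymptotic bullet of Theorem~\ref{thm:main} states that $t \cdot M_t$ converges to an $L$-sheeted spacelike plane marked at the points $p_{l,k}$, so in the limit $\sigma$ induces a horizontal isometry of $\CC$ permuting each level's set $\{p_{l,k}\}_k$; by the asymmetry of the configuration this limiting isometry is the identity. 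A standard upper-semicontinuity argument for closed subgroups of the Lie group of Lorentz isometries (whose stabilizer at the asymmetric limit configuration is trivial) then forces $\sigma$ itself to be the identity for all sufficiently small $t$, which completes the proof.
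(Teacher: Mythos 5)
Your proposal is correct and takes essentially the same route as the paper, whose entire proof of this statement is to take Traizet's asymmetric balanced and rigid configurations, obtained by perturbing singular configurations \cite{traizet2002e}, and feed them into Theorem~\ref{thm:main}. The symmetry-lifting argument you sketch (ends pinned down by distinct growths, necks converging to the marked points $p_{l,k}$, a limiting symmetry of the configuration, then a compactness/no-small-subgroups step) is precisely the content of Traizet's cited argument, which the paper invokes rather than reproduces; the only loose end is that a time-reversing Lorentz isometry negates the logarithmic growths, so you should either arrange $\{Q_l\} \neq \{-Q_l\}$ or observe that a level-reversing limiting symmetry is likewise excluded by the asymmetry of the configuration.
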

\end{itemize}

\begin{remark}
  As we have noticed in Remark~\ref{rem:periodic}, a similar technique can
  produce periodic maxfaces.  Although we do not plan to implement such
  constructions, it is predictable that the balance and non-degeneracy
  conditions are again the same for maxfaces and for minimal surfaces.  Hence,
  the periodic configurations in~\cite{traizet2002r, traizet2008t} and even the
  nonperiodic infinite-genus configurations in~\cite{morabito2012, chen2021}
  should also give rise to maxfaces. 
\end{remark}

\subsection{Singularities on Lorentzian Costa and CHM surfaces}

The Lorentzian Costa and CHM surfaces are particularly interesting in regard to
singularities.

The Lorentzian Costa surface has three disjoint singular curves in the waist of
each neck.  To analyse its singularities, we need to compute
${R}_{l,k}^{(r)}(\theta)$ as in Equation \ref{eqn:R}. We have 

\begin{align*} 
 	R_{1,1}^{(1)}(\theta) &= 6 \sin \theta, \\
 	R_{2,1}^{(1)}(\theta) &= -3 \sin \theta, \\
 	R_{2,2}^{(1)}(\theta) &= -3 \sin \theta.
\end{align*}

Therefore, by Theorem \ref{thm:noncuspidalvariety} and Proposition
\ref{prop:swallowtails}, we can conclude that all non-cuspidal
singularities are swallowtails for sufficiently small $t$.

The computation above did not rely on symmetries.  For a Lorentzian CHM
surface, by Propositions~\ref{prop:rotations} and~\ref{prop:verticalref}, we
can already conclude from its dihedral symmetry that, for sufficiently small
non-zero $t$, there are $2m$ non-zero in the waist of the neck $(1,1)$, all are
swallowtails and are fixed by the vertical reflections.  See
Figure~\ref{fig:CHM}.

\begin{figure}[h!]
  \includegraphics[width=0.6\textwidth]{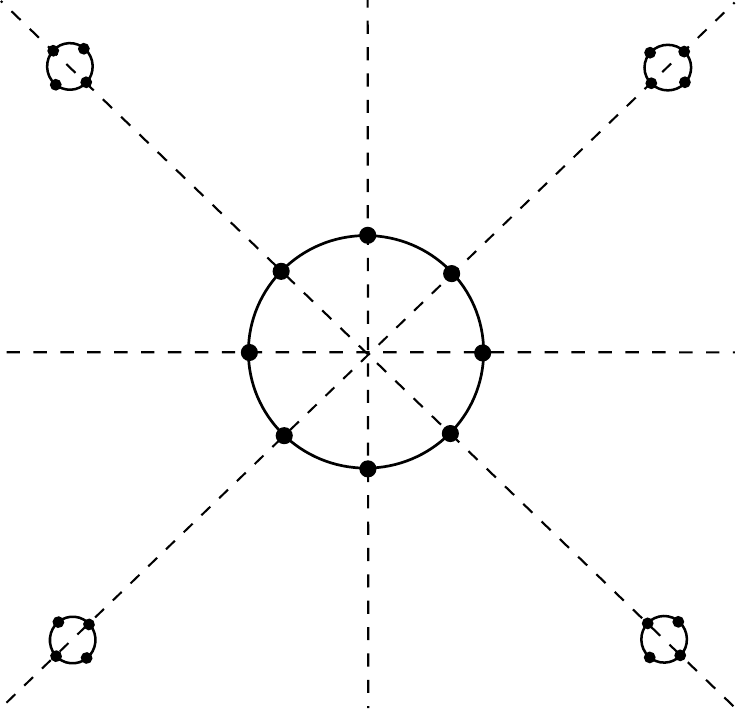}
  \caption{
    Sketch of singularity structure of a CHM surface with $m=4$.  The dashed
    lines indicate the reflection symmetries.  The solid curves are singular
    curves in the waist of the necks.  The singularities are cuspidal edges
    except at the dots, where the singularities are swallowtails.
    \label{fig:CHM}
  }
\end{figure}

Alternatively, we could also perform an explicit computation that
$R_{1,1}^{(r)} = 0$ for all $1 \le r \le m-2$ while
\[
 	R_{1,1}^{(m-1)} = (m+1)m(m-1)^m \sin (m\theta) \not\equiv 0.
\]
Moreover, for $1 \le k \le m$, we have
\[
 	R_{2,k}^{(1)} = (1-m^2) \sin(2\theta - 4 k \pi /m) \not\equiv 0.
\]

Then, by Theorem \ref{thm:noncuspidalvariety} and Proposition
\ref{prop:swallowtails}, we can conclude that for sufficiently small
non-zero $t$, there are $2m$ non-cuspidal singularities in the waist of the
center neck, and four non-cuspidal singularities in the waist of the other
necks, and they are all swallowtails.

In Figure~\ref{fig:CHM3D}, we show the numerical pictures of Lorentzian CHM
surfaces with $m=4$ and $m=5$, and zoom in to show the details of singularities
around the center neck.

\begin{figure}[h!]
  \includegraphics[width=0.5\textwidth]{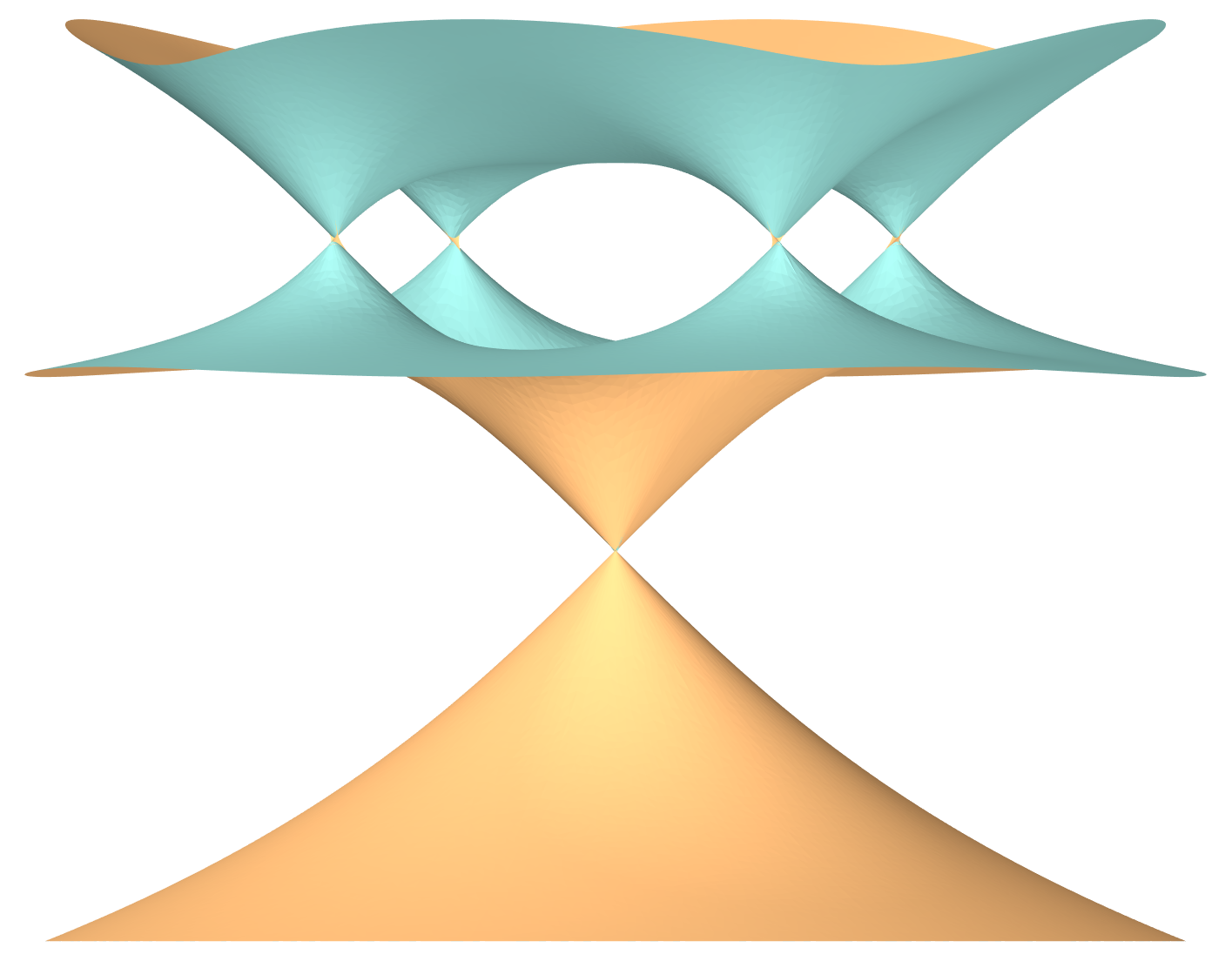}
  \includegraphics[width=0.3\textwidth]{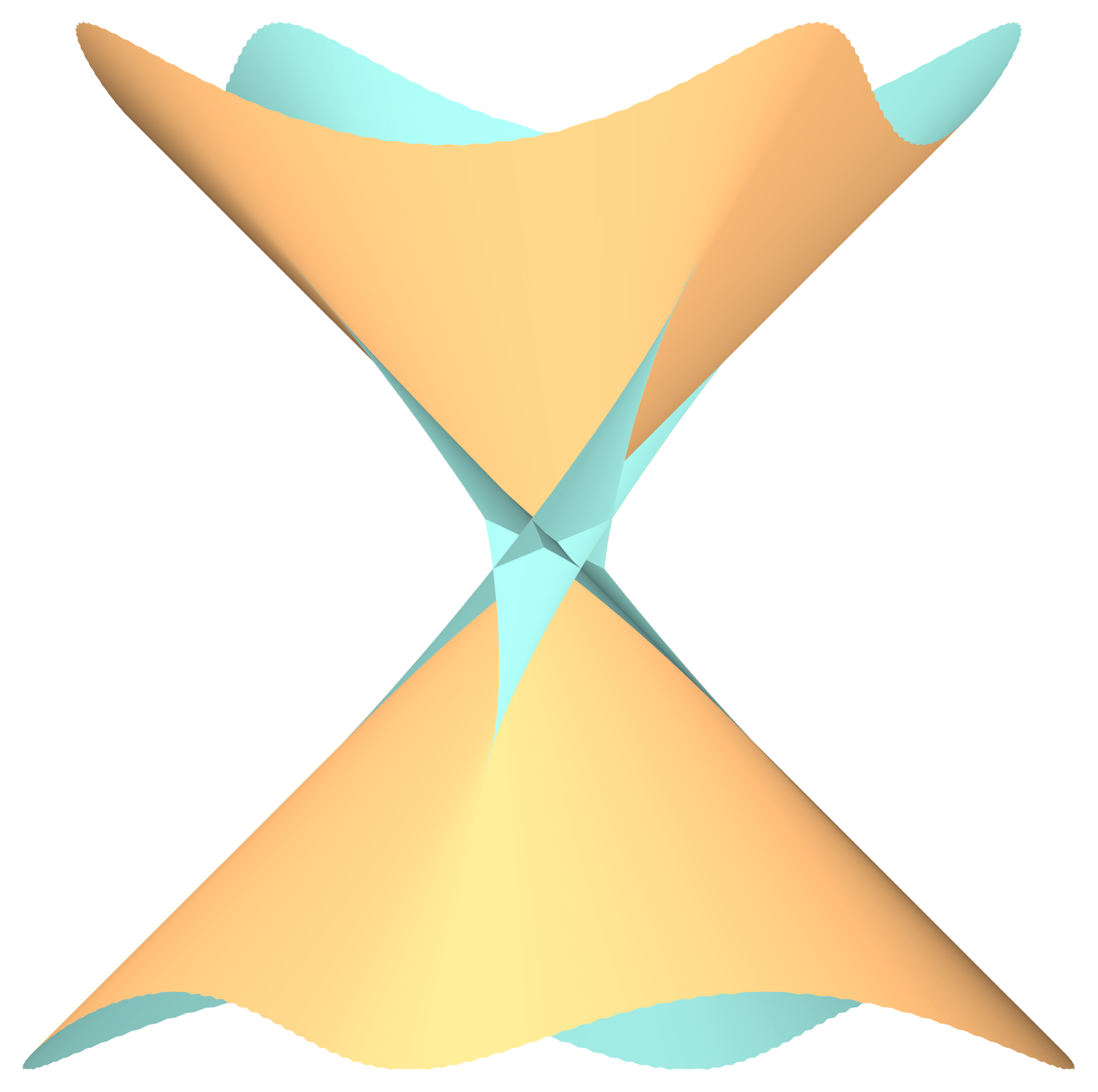}
  \includegraphics[width=0.5\textwidth]{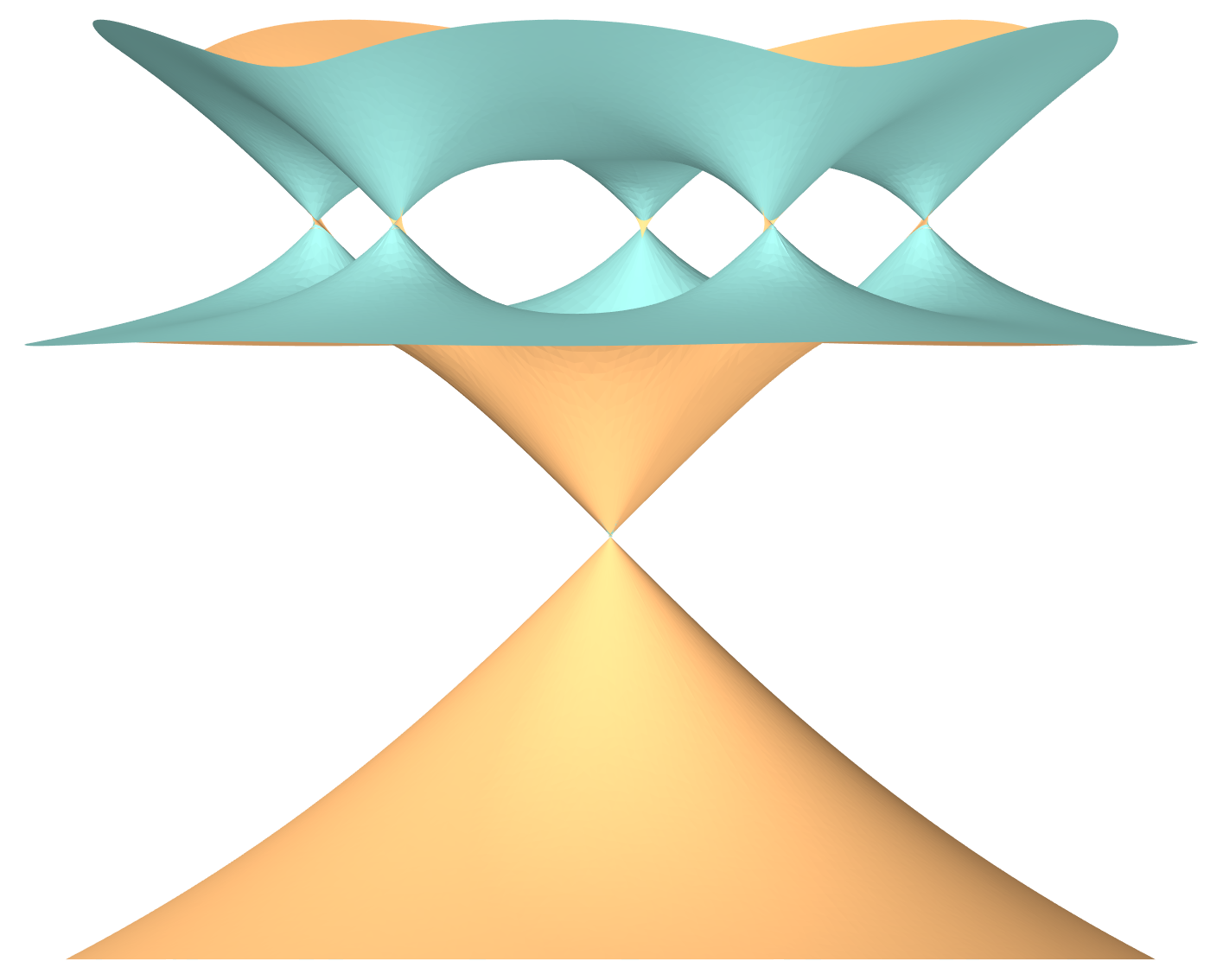}
  \includegraphics[width=0.3\textwidth]{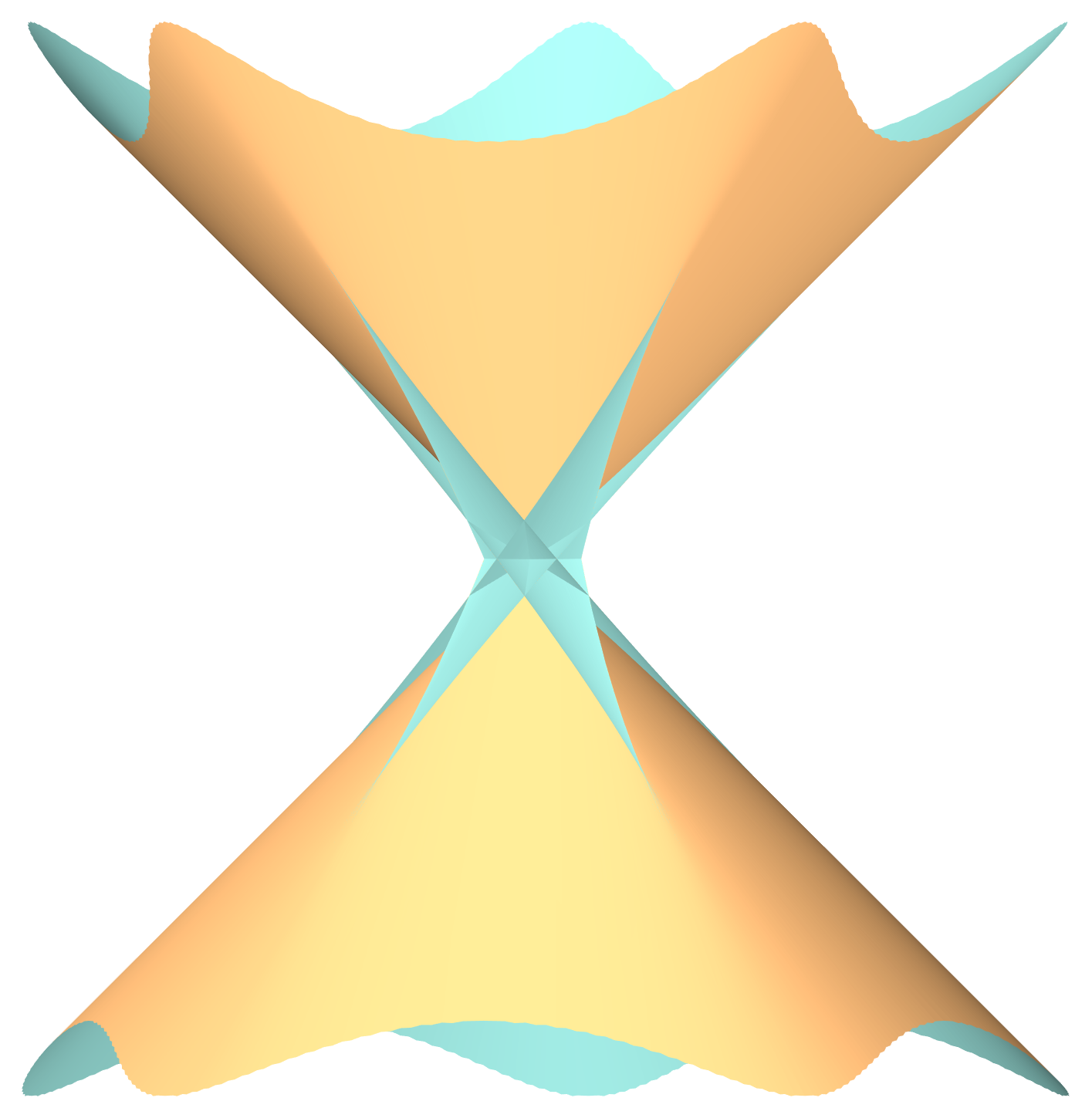}
  \caption{
    CHM sufaces with $m=4$ (top) and $m=5$ (bottom).  On the right-hand side
    are the zoom-ins of the center neck, showing details of the cuspidal edges
    and swallowtails.  To make this picture, we use the node-opening
    construction as in~\cite{traizet2008t}, which is different but equivalent
    to the construction in the current paper, and better suited for numerical
    computations~\cite{traizet2008n}.\label{fig:CHM3D}
  }
\end{figure}

\section{Weierstrass data}\label{sec:WeierstrassData}

We construct maxfaces using a Weierstrass--Enneper-like parameterization,
namely
\begin{equation}\label{eq:maxWeierstrass}
  M \ni z \mapsto \re \int^z \Big(  \frac{1}{2}(g^{-1} + g), \frac{i}{2}(g^{-1} - g), 1 \Big) dh \in \EE^3_1,
\end{equation}
where $M$ is a Riemann surface, possibly with punctures corresponding to the
ends, $g$ is a meromorphic function, and $dh$ a holomorphic $1$-form on $M$,
subject to the following conditions:
\begin{description}
  \item[Divisor condition] Away from the punctures, we must have
    \[
      (g)_0-(g)_\infty= (dh)_0
    \]
    for the Weierstrass integrands to be holomorphic.  The behavior at the
    punctures depend on the type of the ends.

  \item[Period condition] For all closed curves $\gamma$ on $M$, we have
    \begin{align}
      \overline{\int_{\gamma}g^{-1}dh} + \int_{\gamma}gdh &= 0, \label{eq:hperiod}\\
      \re\int_{\gamma}dh &= 0. \label{eq:vperiod}
    \end{align}
    So, closed curves in $M$ are mapped to closed curves on the surface.  This
    guarantees that the immersion is well-defined.

  \item[Regularity condition] $|g|$ is not identically $1$.  In fact, the
   	pullback metric on the Riemann surface $M$ is given by
   	$ds^2=\frac{1}{4}\left(|g|^{-1}-|g|\right)^2|dh|^2$.  In view of the
   	divisor condition, the singularity set for maxfaces is then given by
   	$\{p\in M: |g(p)|=1 \}$.  The regularity condition guarantees that the
   	immersion is regular.
\end{description}

\begin{remark}
  For minimal surface, the horizontal period condition~\eqref{eq:hperiod} would
  have a minus sign in the middle, and the pull-back metric would have a plus
  sign.
\end{remark}

We propose the Weierstrass data in the following, as they will be useful later
for the analysis of singularities.  Some parameters are left undetermined.
They will be determined later in Appendix~\ref{sec:ift} using the Implicit
Function Theorem.

\subsection{The Riemann Surface}\label{sec:riemannsurface}

We construct the Riemann surface by node-opening as follows:

To each of the $L$ horizontal (spacelike) planes is associated a copy of the
complex plane $\CC$, which can be seen as the Riemann sphere with punctures at
$\infty$.  The $L$ copies of $\CC$, as well as their punctures, are then
indexed by $l$, $1 \le l \le L$.  To each neck at level $l$, $1 \le l < L$ is
associated a puncture $a_{l,k} \in \CC_l$ and a puncture $b_{l,k} \in
\CC_{l+1}$, $1 \le k \le n_l$.

Initially at $t=0$, we simply identify $a_{l,k}$ with $b_{l,k}$ for all $1 \le
l < L$ and $1 \le k \le n_l$ to obtain a noded Riemann surface $\Sigma_0$.  As
$t$ increases, fix local coordinates $v_{l,k}$ in the neighborhood of
$a_{l,k}$, and local coordinates $w_{l,k}$ in the neighborhood of $b_{l,k}$; a
concrete choice will be made soon later.  We may fix an $\varepsilon$
sufficiently small and independent of $l$ and $k$ so that the disks
$|v_{l,k}|<2\varepsilon$ and $|w_{l,k}|<2\varepsilon$ are all disjoint.  For $t
< \varepsilon$, we may remove the disks $|v_{l,k}| < t^2/\varepsilon$ and
$|w_{l,k}| < t^2/\varepsilon$, and identify the annuli
\[
  t^2/\varepsilon < |v_{l,k}| < \varepsilon \qquad t^2/\varepsilon < |w_{l,k}| < \varepsilon
\]
by
\[
  v_{l,k} w_{l,k} = t^2.
\]
The resulting Riemann surface is denoted by $\Sigma_t$.

\subsection{Gauss map and local coordinates}\label{sec:Gaussmap}

We define on $\CC_l$ the meromorphic function
\[
 	g_l:=\sum_{k=1}^{n_l}\frac{\alpha_{l,k}}{z-a_{l,k}}+\sum_{k=1}^{n_{l-1}}\frac{\beta_{l-1,k}}{z-b_{l-1,k}}.
\]
Then the Gauss map $g$ is defined on $\Sigma_t$ as
\[
 	g(z) =
 	\begin{cases}
    t g_l(z) & \text{if } z\in\CC_l \text{ and $l$ is odd,}\\
    1/(t g_l(z)) & \text{if } z\in\CC_l\text{ and $l$ is even.}
 	\end{cases}
\]
As $t \to 0$, the Gauss map converges to that of catenoids around the necks.
Note that $1/g_l$ provides local coordinates $v_{l,k}$ around $a_{l,k}$ and
$w_{l-1,k}$ around $b_{l-1,k}$.  From now on, we adopt these local coordinates
for the construction of $\Sigma_t$.

\subsection{The height differential}\label{sec:dh}

Recall the period conditions $\re\int_\gamma dh = 0$ for every closed cycles
$\gamma$ of $\Sigma_t$.  Define
\[
 	\Omega_l:=\{z \in \CC_l: |v_{l,i}|\geq \varepsilon\quad\forall 1 \le i \le n_l \quad\text{and}\quad 
 	|w_{l-1,j}|\geq\varepsilon \quad\forall 1 \le j \le n_{l-1}\},
\]
where $\varepsilon$ was previously fixed for the construction of $\Sigma_t$.
Let $\gamma_{l,k}$ be small clockwise circles in $\Omega_l$ around $a_{l,k}$;
they are homologous to counterclockwise circles in $\Omega_{l+1}$ around $b_{l,k}$.
We close the vertical periods by requiring that $\int_{\gamma_{l,k}} dh = 2\pi
i r_{l,k}$ for real numbers $r_{l,k}$.  Moreover, as we expect catenoid ends at
$\infty_l$, we require that the height differential $dh$ has simple poles of
residues $-R_l \in \RR$ at $\infty_l$, $1 \le l \le L$.  By the Residue
Theorem, it is necessary that $\sum R_l = 0$ and
\[
  \sum_{k=1}^{n_l} r_{l,k} - \sum_{k=1}^{n_{l-1}} r_{l-1,k} = -R_l.
\]
So, it suffices to prescribe the residue $-R_l$ and the periods around
$\gamma_{l,k}$ for $1 < k \le n_l$. 

By~\cite{traizet2002e}, these requirements uniquely determine the
height differential $dh$.  Moreover, as $t \to 0$, $dh$ converges uniformly on
a compact set of $\Omega_l$ to the form
\begin{equation}\label{eq:heightdiff}
 \sum_{k = 1}^{n_l} \frac{-r_{l,k} dz}{z-a_{l,k}} + \sum_{k = 1}^{n_{l-1}} \frac{r_{l-1,k} dz}{z-b_{l-1,k}} 
\end{equation}

We want catenoid or planar ends at the punctures $\infty_l$.  This translates
to the following divisor condition at $\infty_l$:  Whenever $g$ has a simple
zero or pole there, $dh$ must have a simple zero; this corresponds to the
catenoid ends.  On the other hand, whenever $g$ has a zero or pole of
multiplicity $m > 1$ at $\infty_l$, $dh$ must have a zero or multiplicity
$m-2$; this corresponds to the planar ends.  Because $dz$ has a pole of order
$2$ at the punctures $\infty$, our divisor condition can be formulated as

\begin{equation} %\label{eq:divisor}
  (g)_0-(g)_\infty= (dh/dz)_0.
\end{equation}

\medskip

The next step of the construction is to determine the parameters using the
Implicit Function Theorem.  More specifically, with all parameters varying in a
neighborhood of their initial values (at $t=0$), we need to prove the following
\begin{itemize}
	\item There exist unique values for $\alpha$ and $\beta$, depending
		analytically on other parameters such that the divisor conditions are
		satisfied.

	\item With $\alpha$ and $\beta$ given above, there exist unique values for
		$r$, depending analytically on remaining parameters, such that the vertical
		period condition are satisfied.

	\item With $\alpha$, $\beta$ and $r$ given above, there exist unique values
		for $a$, $b$ and $R$, depending smoothly on $\tau$, such that $\sum_l R_l =
		0$ and the horizontal period condition are satisfied.

\end{itemize}

The proofs for these claims are very similar to those for minimal
surfaces~\cite{traizet2002e} with only slight modifications, and they are not
essential for the following analysis of singularities, so we postpone them to
Appendix~\ref{sec:ift}.

\section{Singularities}\label{sec:singularities}

The rest of the paper is devoted to the analysis of singularities.  The study
of minimal surfaces usually avoids singularities, but complete non-planar
maxfaces always appear with singularities.

Recall that the singular set is given by $|g|=1$.  From our definition of the
Gauss map, the singularity set of the maxface $X_t$ is given by the union of
\[
  \mathcal{S}_{l,k} = \{z \in \CC_l : |v_{l,k}|=t\} = \{z \in \CC_{l+1} : |w_{l,k}|=t\}
\]
with $1\leq l\leq N-1, 1 \le k \le n_l$.  We aim to analyze
the nature of these singularities.

For this purpose, we will focus on singularities around a specific neck of
interest, labeled by $(l,k)$.  Without loss of generality, we may assume that
$l$ is odd.  So the Gauss map $g_t = t/v_{l,k} = w_{l,k}/t$ in the local
coordinates.  To ease the text, we will omit the subscript $(l,k)$ unless
necessary.  So we study the connected component $\mathcal{S}$ of the singular
set given by $|v|=|w|=t$.

\subsection{The governing function}\label{sec:Governingfunction}

We need to study the function
\begin{equation}\label{eq:A}
  \cA(t, \theta) = -\frac{g_t(dh_t/dv)}{dg_t/dv}\Big|_{v=e^{i \theta}} = t e^{i \theta} f_t(t e^{i \theta}),
\end{equation}
where $f_t := dh_t/dv$.

Let $p$ be a singular point with $v(p) = t e^{i \theta}$.  On the one hand, it
was proved in~\cite{umehara2006} that the
parameterization~\eqref{eq:maxWeierstrass} is a front (that is, the projection
of a Legendrian immersion into the unit cotangent bundle of $\RR^3$) on a
neighborhood $U$ of a singular point $p$ and $p$ is a nondegenerate singular
point (meaning $d\lambda = 0$ where $\lambda^2$ is the determinant of the
Euclidean metric tensor) if and only if $\re(1/\cA(t, \theta))\ne 0$.  If this
is the case, the singular set $\mathcal{S}$ is a smooth \emph{singular curve}
in $U$ that passes through $p$.  In our case, the singular curve
$\gamma_t(\theta)$ is actually given by $v(\gamma_t(\theta)) = t e^{i \theta}$,
$0 \le \theta < 2\pi$.

On the other hand, it was shown in~\cite{umehara2006} that
\[
 	\det(\dot\gamma_t(\theta), \eta_t(\theta)) = \im(1/\cA(t, \theta)),
\]
where $\dot\gamma_t$ is the \emph{singular direction} and $\eta_t \in
\operatorname{Ker}(dX_t)$ is the \emph{null direction}.  So $\im(1/\cA)$
measures the collinearity between the $\dot\gamma$ and $\eta$.

In particular, $p=te^{i\theta}$ is a cuspidal singularity whenever
\begin{equation} %\label{eqn:cuspidaledge}
  \re \cA(t, \theta) \ne 0 \quad\text{and}\quad  \im \cA(t, \theta) \ne 0,
\end{equation}
and $p$ is a swallowtail singularity whenever
\begin{equation} %\label{eqn:swallowtail}
  \re \cA \ne 0, \quad \im \cA = 0, \quad\text{and}\quad \frac{\partial}{\partial\theta} \im \cA \ne 0.
\end{equation}
In fact, the cuspidals are $A_2$ singularities, the swallowtails are $A_3$
singularities and the butterflies are $A_4$ singularities. More generally, one
may define~\cite{honda2021} that $p$ is a \emph{generalized $A_{k+2}$
singularity} if
\begin{equation} \label{eqn:generalAk}
  \re \cA\neq 0, \quad \im \cA = \frac{\partial}{\partial\theta} \im \cA = \cdots \frac{\partial^{k-1}}{\partial\theta^{k-1}} \im \cA = 0, \quad\frac{\partial^k}{\partial\theta^k} \im \cA \ne 0.
\end{equation}
It was proved \cite{izumiya2012, kokubu2005} that generalized $A_k$
singularities with $k=2,3,4$ are indeed $A_k$ singularities, but this is not
known for $k\ge 5$.

\subsection{Derivatives of \texorpdfstring{$\cA$}{A}}\label{sec:Partialderivativeof A}

From equation \ref{eqn:generalAk}, it is clear that we will need higher
derivatives of $\mathcal{A}$ over $t$ to determine the nature of singularities.
The following calculation can be seen as generalizing Traizet's
calculation~\cite{traizet2008t} of the first derivative of $dh$ over $t$.

The height differential $dh_t$ for the maxface, as defined in
Section~\ref{sec:dh}, has the following Laurent expansion in the annulus
$t^2/\varepsilon < v < \varepsilon$:
\[
  dh_t(v) = \sum_{n \in \ZZ} a_n(t) v^n dv, \quad
  dh_t(w) = \sum_{n \in\ZZ} b_n(t) w^n dw,
\]
where
\[
  a_n = \frac{1}{2\pi i} \int_{|v|=\varepsilon}\frac{dh_t}{v^{n+1}}, \quad
  b_n = \frac{1}{2\pi i} \int_{|w|=\varepsilon}\frac{dh_t}{w^{n+1}}.
\]
Moreover, for each $n\in \ZZ$,
\[
  a_n = \frac{1}{2\pi i} \int_{|v|=\varepsilon}\frac{dh_t}{v^{n+1}}
  = \frac{-1}{2\pi i} \int_{|w|=\varepsilon} \frac{dh_t w^{n+1}}{t^{2n+2}}
\]
because of the gluing.  Therefore, we have
\begin{equation}\label{eq:dmandtm}
  \frac{\partial^m a_n}{\partial t^m}
  = \frac{-1}{2\pi i} \sum_{j = 0}^{m}
  \int_{|w|=\varepsilon} w^{n+1} \binom{m}{j} \frac{\partial^{m-j} dh_t}{\partial t^{m-j}}(-2n-2)_j\frac{1}{t^{2n+2+j}}.
\end{equation}
Here, $(a)_j = a(a-1)\cdots(a-j+1)$ (in particular $(a)_0=1)$ is the descending
factorial. In particular, $(-2n-2)_j = 0$ whenever $0 \le -2n-2 < j$. Note that
$dh_t$ and its derivatives are bounded on the circle $|w| = \varepsilon$.

We now prove the following formula for partial derivatives of $\cA$ over $t$.
\begin{equation}\label{eq:dkAdtk}
  \frac{1}{m!}\frac{\partial^m \cA}{\partial t^m}(0,\theta)=\lim_{t\to 0}\sum_{0\leq n\leq m-1}\frac{1}{(m-n-1)!}\frac{\partial^{m-n-1}}{\partial t^{m-n-1}}(a_ne^{i(n+1)\theta}-b_ne^{-i(n+1)\theta}).
\end{equation}

\begin{proof}
 	Recall the expression~\eqref{eq:A} of $\cA$
 	\begin{equation}\label{eq:Ainv}
    \cA(t, \theta) = t e^{i \theta} \frac{dh_t}{dv}(t e^{i \theta})
    = \sum_{n \in \ZZ} a_n(t) t^{n+1} e^{i (n+1) \theta} = \sum_{n \in \ZZ} b_n(t) t^{n+1} e^{-i (n+1) \theta},
 	\end{equation}
 	which can be expanded in terms of $t$ as follows
 	\begin{equation}\label{eq:A(t,theta)}
    \cA(t, \theta) = \sum_{m \geq 0} \frac{1}{m!}\frac{\partial^m \cA}{\partial t^m} (0, \theta)  t^m.
 	\end{equation}
 	From \eqref{eq:Ainv}, we have 
 	\begin{align*}
  	\frac{1}{m!}\frac{\partial^m \cA}{\partial t^m}(t,\theta)
  	&= \sum_{n \in \ZZ} \sum_{j = 0}^m \binom{m}{j} \frac{(n+1)_j}{m!} \frac{\partial^{m-j} a_n}{\partial t^{m-j}} t^{n+1-j} e^{i(n+1)\theta}.
 	\end{align*}
 	In the following, we will calculate the limits of the terms on the right-hand
 	side as $t \to 0$.  The computation is done case by case.

 	\begin{description}
    \item[$n>m-1$] Since derivative of $a_n$ is bounded, $\frac{\partial^{m-j}
    	a_n}{\partial t^{m-j}} t^{n+1-j}\to 0$ as $t\to 0$ when $n>j-1$. In
    	particular, this also holds when $n>m-1$.  We conclude that, when $n>m-1$
    	and as $t\to 0$,
     	\[
        \sum_{j = 0}^m \binom{m}{j} \frac{(n+1)_j}{m!} \frac{\partial^{m-j} a_n}{\partial t^{m-j}} t^{n+1-j} e^{i(n+1)\theta}\to 0.
     	\]

    \item[$n=m-1$] In this case, as $t\to 0$, we have
     	\[
        \sum_{j = 0}^m \binom{m}{j} \frac{(m)_j}{m!} \frac{\partial^{m-j} a_{m-1}}{\partial t^{m-j}} t^{m-j} e^{im\theta}\to a_{m-1}e^{im\theta},
     	\]
     	since derivatives of $a_n$ are bounded and, for $j<m$, $t^{m-j}\to 0.$

    \item[$0 \leq n<m-1$] In this case, as $t \to 0$, we have
     	\[
        \sum_{j = 0}^m \binom{m}{j} \frac{(n+1)_j}{m!} \frac{\partial^{m-j} a_n}{\partial t^{m-j}} t^{n+1-j} e^{i(n+1)\theta}\to \frac{1}{(m-n-1)!}\frac{\partial^{m-n-1}a_n}{\partial t^{m-n-1}}e^{i(n+1)\theta},
     	\]
     	since $j=n+1$ is the only non-zero term in the summation.

    \item[$n=-1$]  In this case, we have
     	\[
      	\sum_{j = 0}^m \binom{m}{j} \frac{(n+1)_j}{m!} \frac{\partial^{m-j} a_n}{\partial t^{m-j}} t^{n+1-j} e^{i(n+1)\theta} = 0.
     	\]

    \item[$-m \le n \le -2$]
     	In this case, we have
     	\begin{align*}
        \frac{\partial^{m-j} a_n}{\partial t^{m-j}} t^{n+1-j}
        &= \frac{-1}{2\pi i}\sum_{k=0}^{m-j} \int_{w = |\varepsilon|} w^{n+1} \binom{m-j}{k} \frac{\partial^{m-j-k} dh_t}{\partial t^{m-j-k}}(-2n-2)_k t^{-n-1-k-j}\\
        &\to \frac{-1}{2\pi i}\sum_{k=-n-1-j}^{m-j} \int_{w = |\varepsilon|} w^{n+1} \binom{m-j}{k} \frac{\partial^{m-j-k} dh_t}{\partial t^{m-j-k}}(-2n-2)_k t^{-n-1-k-j}.
     	\end{align*}
     	By the identity
     	\begin{multline}\label{eq:identity1}
        \sum_{j=0}^l \binom{m}{j} \frac{(n+1)_j}{m!} \binom{m-j}{k}(-2n-2)_k\\
        =\begin{dcases}
          \frac{1}{(m-n-1)!}, & \text{ when } l=-n-1\\
          0, & \text{ when } l<-n-1
        \end{dcases},
     	\end{multline}
     	where $l=j+k$ and $-m \le n \le -2$, we conclude that
     	\[
        \sum_{j=0}^m \binom{m}{j} \frac{(n+1)_j}{m!} \frac{\partial^{m-j} a_n}{\partial t^{m-j}} t^{n+1-j}\to\frac{-1}{(m+n+1)!}\frac{\partial^{m+n+1}}{\partial t^{m+n+1}} b_{-n-2}.
     	\]

    \item[$n=-(m+1)$]  In this case,
     	\begin{align*}
        \frac{\partial^{m-j} a_{-(m+1)}}{\partial t^{m-j}}t^{-m-j}
        &= \frac{-1}{2\pi i} \sum_{k = 0}^{m-j}
        \int_{|w|=\varepsilon} w^{-m} \binom{m-j}{k} \frac{\partial^{m-j-k} dh_t}{\partial t^{m-j-k}}(2m)_k\frac{1}{t^{-m+j+k}}\\ 
        &\to \frac{-1}{2\pi i}\int_{|w|=\epsilon}w^{-m}dh_t(2m)_{m-j}.
     	\end{align*}
     	As $t\to 0$, we have
     	\begin{align*}
        &\sum_{j = 0}^m \binom{m}{j} \frac{(-m)_j}{m!} \frac{\partial^{m-j} a_{-(m+1)}}{\partial t^{m-j}} t^{-m-j} e^{-im\theta}\\
        \to&\frac{-1}{2\pi i}\sum_{j = 0}^m \binom{m}{j} \frac{(-m)_j}{m!}\int_{|w|=\epsilon}w^{-m}dh_t(2m)_{m-j}e^{-im\theta}\\
        =&\sum_{j = 0}^m \binom{m}{j} \frac{(-m)_j}{m!}\frac{(2m)!}{(m+j)!}a_{-(m+1)}t^{-2m}e^{-im\theta}\\
        =&\sum_{j = 0}^m \binom{m}{j} \frac{(-m)_j}{m!}\frac{(2m)!}{(m+j)!}(-1)b_{m-1}e^{-im\theta}.
     	\end{align*}
     	By the identity
     	\begin{equation}\label{eq:identity2}
        \sum_{j=0}^m \binom{m}{j}\frac{(-m)_j(2m)_{m-j}}{m!}=1, \quad m>0,
     	\end{equation}
     	we conclude that, as $t\to 0$,
     	\[
        \sum_{j=0}^m \binom{m}{j} \frac{(-m)_j}{m!} \frac{\partial^{m-j} a_{-m-1}}{\partial t^{m-j}} t^{-m-j}\to -b_{m-1}.
     	\]

    \item[$n<-(m+1)$] In this case, by~\eqref{eq:dmandtm}
     	\[
        \frac{\partial^{m-j} a_n}{\partial t^{m-j}}t^{n+1-j}
        = \frac{-1}{2\pi i} \sum_{k = 0}^{m-j}
        \int_{|w|=\varepsilon} w^{n+1} \binom{m-j}{k} \frac{\partial^{m-j-k} dh_t}{\partial t^{m-j-k}}(-2n-2)_k\frac{1}{t^{n+1+j+k}}. 
     	\]
     	Since partial derivatives of $dh$ are bounded on $|w|=\epsilon$,
     	$\frac{\partial^{m-j} a_n}{\partial t^{m-j}}t^{n+1-j}\to 0$ as $t\to 0$
     	when $n+1+j+k<0$. In particular, this also holds when $n<-(m+1)$. We
     	conclude that, when $n<-(m+1)$ and as $t\to 0$,
     	\[
        \sum_{j = 0}^m \binom{m}{j} \frac{(n+1)_j}{m!} \frac{\partial^{m-j} a_n}{\partial t^{m-j}} t^{n+1-j} e^{i(n+1)\theta}\to 0.
     	\]
 	\end{description}

 	Gathering all these computations, we obtain
 	\begin{align*}
    \frac{1}{m!}\frac{\partial^m \cA}{\partial t^m}(0,\theta)
    &=\lim_{t \to 0}\Bigg(-\sum_{-m-1\leq n\leq -2}\frac{1}{(m+n+1)!}\frac{\partial^{m+n+1}}{\partial t^{m+n+1}} b_{-n-2}e^{i(n+1)\theta}\\
    &\quad\quad\quad\quad+\sum_{0\leq n\leq m-1}\frac{1}{(m-n-1)!}\frac{\partial^{m-n-1}a_n}{\partial t^{m-n-1}}e^{i(n+1)\theta}\Bigg)\\
    &=\lim_{t\to 0}\sum_{0\leq n\leq m-1}\frac{1}{(m-n-1)!}\frac{\partial^{m-n-1}}{\partial t^{m-n-1}}(a_ne^{i(n+1)\theta}-b_ne^{-i(n+1)\theta}).
 	\end{align*}
\end{proof}

The proof above involves two combinatorial identities, namely
Eqs~\eqref{eq:identity1} and~\eqref{eq:identity2}, which we now prove.

\begin{proof}[Proof of \eqref{eq:identity2}]
 	\begin{align*}
  	\sum_{j=0}^m \binom{m}{j}\frac{(-m)_j(2m)_{m-j}}{m!} &= \sum_{j=0}^m (-1)^j \frac{m!(m+j-1)!(2m)!}{m!(m-j)!j!(m-1)!(m+j)!}\\
  	&=\sum_{j=0}^m (-1)^j \frac{m}{m+j} \binom{m+j}{j} \binom{2m}{m+j}\\
  	&=\sum_{j=0}^m (-1)^j \frac{m}{m+j} \binom{m}{j} \binom{2m}{m}\\
  	&=m \binom{2m}{m} \sum_{j=0}^m \frac{(-1)^j}{m+j} \binom{m}{j} = 1,
 	\end{align*}
 	where the last line follows from
 	\begin{align*}
  	\sum_{j=0}^m \frac{(-1)^j}{m+j} \binom{m}{j} &=
  	\sum_{j=0}^m (-1)^j \binom{m}{j} \int_0^1 x^{m+j-1} dx\\
  	&=\int_0^1 x^{m-1} \sum_{j=0}^m \binom{m}{j} (-x)^j dx=\int_0^1 x^{m-1}(1-x)^m dx \\
  	&=\frac{\Gamma(m)\Gamma(1+m)}{\Gamma(1+2m)}=\frac{(m-1)!m!}{(2m)!}=\frac{1}{m\binom{2m}{m}}.
 	\end{align*}
\end{proof}

\begin{proof}[Proof of \eqref{eq:identity1}]
 	For each $-n-1 \le l= j+k \le m$,
 	\begin{align*}
  	\sum_{j=0}^l \binom{m}{j} \frac{(n+1)_j}{m!} \binom{m-j}{k}(-2n-2)_k 
  	&= \sum_{j=0}^l \binom{m}{l} \binom{l}{j}\frac{(n+1)_j}{m!} (-2n-2)_{l-j}\\
  	&=\binom{m}{l}\frac{l!}{m!} \sum_{j=0}^l \binom{l}{j}\frac{(n+1)_j(-2n-2)_{l-j}}{l!}
 	\end{align*}
 	which, by similar argument as before, equals $\frac{1}{(m+n+1)!}$ when
 	$l=-n-1$.  Otherwise, if $l > -n-1$, it
 	\begin{align*}
  	&=\binom{m}{l}\frac{l!}{m!} \sum_{j=0}^l (-1)^j \frac{l!(-n+j-2)!(-2n-2)!}{l!(l-j)!j!(-n-2)!(-2n-2-l+j)!}\\
  	&=\binom{m}{l}\frac{l!}{m!} \sum_{j=0}^l (-1)^j \binom{-n+j-2}{j} \binom{-2n-2}{-2n-2-l+j}\\
  	&=\binom{m}{l}\frac{l!}{m!} \sum_{j=0}^l (-1)^j \frac{(-n-2+j)_{l+n}}{(-n-2)_{l+n}} \binom{-2n-2-l+j}{j} \binom{-2n-2}{-2n-2-l+j}\\
 	&=\binom{m}{l}\frac{l!}{m!} \sum_{j=0}^l (-1)^j \frac{(-n-2+j)_{l+n}}{(-n-2)_{l+n}} \binom{l}{j} \binom{-2n-2}{-2n-2-l}=0\end{align*}
 	because $(-n-2+j)_{l+n}$ is a polynomial of $j$ of degree $0 \le l+n < l$.
\end{proof}

\subsection{Nature of singularities}\label{sec:natureofSIngularties}

We are now in the possition to analyze the nature of singularities on the
maxfaces we constructed by node-opening. 

Recall that $dh_t$ extend real analytically to $t=0$ with the form given
by~\eqref{eq:heightdiff}, with simple poles of residue $-r_{l,k}$ at the nodes
at $a_{l,k}$.  Because $-r_{l,k}\to -c_l$ as $t \to 0$, we have $\cA(t,\theta)
\to  -c_l \neq 0$ no matter the value of $\theta$.  This implies that
$A(t,\theta)$ extends real analytically to $t=0$ with a non-zero finite value
independent of $\theta$.  So $\re(1/\cA(t,\theta))$ extends real analytically
to $t=0$ with a non-zero value independent of $\theta$.  By continuity, we have
$\re(1/\cA) \ne 0$ for sufficiently small $t$.

This proves that, for sufficiently small non-zero $t$, the Weierstrass
parameterization defines a front in a neighborhood of the singular points, and
the singular points are all nondegenerate.  Note that, for sufficiently small
$t$, the singular set is a circle of radius $t$ in the local coordinate $v$,
which obviously defines a smooth curve.  So, the nondegeneracy of the singular
points is expected.

\subsubsection{Non-cuspidal singularities}\label{subsec:noncuspidal}

The node opening could also be implemented by an identification $v w = s$ where
$s$ is a complex parameter. It was proved in~\cite{traizet2002e} that
the height differential $dh$ depends holomorphically on $s$ and $v$, and
extends holomorphically to $s=0$.  In our case, we have $v f_t$ extends
holomorphically to $(s, v)=(0, 0)$ with the value $-c_l$.  So $\cA(t, \theta)$
depends real analytically on $t$ and $\theta$, and extends real analytically to
$t=0$ with the value $-c_l$ independent of the value of $\theta$.

The singularity is cuspidal when $\im \cA \ne 0$.  So, the set of non-cuspidal
singularities around a neck, given as the zero locus $\im \cA = 0$, is a real
analytic variety.

If the zero locus has a non-zero measure, then $\im \cA \equiv 0$, and the
singular curve is mapped to a single point, so we have a cone singularity no
matter $t$ and $\theta$~\cite{fujimori2009}.

Otherwise, by Lojasiewicz's theorem, the non-cuspidal singular set can be
stratified into a disjoint union of real analytic curves ($1$-strata) and
discrete points ($0$-strata).  In particular, $t=0$ is a trivial solution of
$\im \cA=0$, and there is no $0$-strata for $t \ne 0$ sufficiently small.  In
other words, in a neighborhood of $t=0$, the set of non-cuspidal singularities
is given by disjoint curves.  See Figure~\ref{fig:variety}.

\begin{figure}[h!]
  \includegraphics[width=\textwidth]{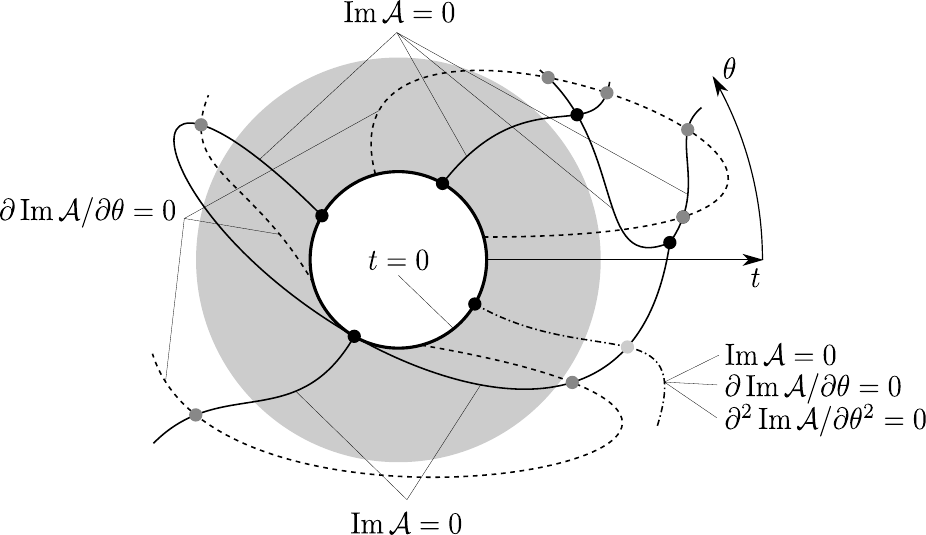}
  \caption{
    Sketch of a typical structure of non-cuspidal singularities. The circle in
    the middle is the trivial locus with $t=0$.  Solid curves are solutions
    to~$\im\cA = 0$.  Dashed curves are solutions to $\partial \im\cA /
    \partial \theta = 0$.  Dots indicate the $0$-strata.  The grey dots at the
    intersection of solid and dashed curves are then at least butterfly
    singularities.  The dot-dashed curve indicates a possible curve that solves
    $\im\cA = \partial \im\cA / \partial \theta = \partial^2 \im\cA / \partial
    \theta^2 = 0$. Singularities on this curve are then at least generalized
    $A_5$ singularities.  The grey area indicates a neighborhood of the trivial
    locus that includes no $0$-strata of the variety. The variety appears as
    disjoint curves within this neighborhood.  \label{fig:variety}
  }
\end{figure}

More generally, the set of generalized $A_k$-singularities, $k > 0$, is a
real-analytic variety given by the zero locus
\[
 	\mathcal{Z}_k = \{(t, \theta) : \im\cA = \frac{\partial}{\partial\theta} \im \cA = \cdots \frac{\partial^{k-1}}{\partial\theta^{k-1}} \im \cA = 0\}.
\]
Again, by Lojasiewicz's theorem, the locus can be stratified into curves and
discrete points and contains the trivial solution $t=0$.  In particular, if the
singularities are of type $A_k$ along a segment of a curve in $\mathcal{Z}_k$,
then the type will remain along this curve until hitting a $0$-stratum of
$\mathcal{Z}_{k+1}$. See Figure~\ref{fig:variety}.

We have proved the following

\begin{theorem}\label{thm:noncuspidalvariety}
  For $t \ne 0$ sufficiently small, if the height differential $dh_t$ is not
  identically $0$ (as a function of $t$ and $\theta$), then the non-cuspidal
  singularities around a neck are described by a disjoint union of finitely
  many curves in the $(t, \theta)$-plane, each given by a real-analytic
  function $\theta = \theta(t)$.  Moreover, along each of these curves, the
  type of singularities is invariant for $t \ne 0$ sufficiently small.
\end{theorem}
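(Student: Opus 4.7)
The plan is to build directly on the real-analytic setup already sketched in Section~\ref{subsec:noncuspidal}. First, following the holomorphic dependence result in~\cite{traizet2002e}, the height differential $dh_t$ extends holomorphically in the node-opening parameter $s=t^2$ and the local coordinate $v$ to $s=0$, from which $\cA(t,\theta) = t e^{i\theta} f_t(t e^{i\theta})$ extends real-analytically to $t=0$ with constant value $-c_l$. In particular, $\im\cA$ is real-analytic on a neighborhood of $\{t=0\}\times S^1$ and vanishes identically on the trivial circle $\{t=0\}$, so the circle itself is a component of the zero set.

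Next, apply Lojasiewicz's stratification theorem to the real-analytic zero set
\[
\mathcal{Z}_1 = \{(t,\theta) : \im\cA(t,\theta) = 0\}.
\]
Compactness of $S^1$ together with local finiteness of the stratification guarantees that, in some neighborhood $U$ of the circle, $\mathcal{Z}_1$ decomposes into finitely many real-analytic strata: the trivial circle, finitely many $1$-strata, and isolated $0$-strata. Assuming $dh_t\not\equiv 0$, the variety $\mathcal{Z}_1\setminus\{t=0\}$ is a proper real-analytic subvariety, and by shrinking $U$ one may assume no $0$-strata lie in $U$ off the trivial circle. Each remaining $1$-stratum then meets the circle transversally at an isolated point; the real-analytic curve selection lemma (or a Puiseux-type parametrization near that point) produces a real-analytic function $\theta=\theta(t)$ whose graph traces the stratum for $t$ near $0$.

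For the invariance of singularity type, iterate the stratification argument on the nested loci
\[
\mathcal{Z}_k = \Big\{(t,\theta) : \im\cA = \tfrac{\partial \im\cA}{\partial \theta} = \cdots = \tfrac{\partial^{k-1}\im\cA}{\partial \theta^{k-1}} = 0\Big\},
\]
each of which is a real-analytic variety containing $\{t=0\}$. A point in $\mathcal{Z}_k\setminus\mathcal{Z}_{k+1}$ is a generalized $A_{k+1}$ singularity, hence the type along a $1$-stratum of $\mathcal{Z}_k$ can change only when crossing a $0$-stratum of $\mathcal{Z}_{k+1}$. Since such $0$-strata are discrete and cannot accumulate on the circle, further shrinking $U$ removes them all for each finitely many relevant $k$, so the type is locked along each curve for $t\ne 0$ sufficiently small.

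The main obstacle is the real-analytic extension of $\cA$ across $t=0$, which rests on Traizet's holomorphic dependence of $dh_t$ on $s=t^2$ and on a careful handling of the change of variables $v\mapsto te^{i\theta}$; once this is in hand, the Lojasiewicz machinery does the heavy lifting and the rest is bookkeeping. A secondary delicate point is promoting the local finiteness of strata around each point of the circle to global finiteness, which follows from compactness of $S^1$ together with the locally finite nature of Lojasiewicz stratifications.
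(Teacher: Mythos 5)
Your proposal follows essentially the same route as the paper's proof: real-analytic extension of $\cA$ to $t=0$ via Traizet's holomorphic dependence of $dh$ on the node-opening parameter $s$, Lojasiewicz stratification of the zero locus $\{\im\cA=0\}$ with no $0$-strata off the trivial circle $\{t=0\}$ in a small neighborhood, and iteration of the same argument on the nested loci $\mathcal{Z}_k$ to lock the singularity type along each curve. The only quibble is a harmless off-by-one in labeling: with the paper's convention~\eqref{eqn:generalAk}, a point of $\mathcal{Z}_k\setminus\mathcal{Z}_{k+1}$ (where $\re\cA\ne 0$) is a generalized $A_{k+2}$ singularity, not $A_{k+1}$.
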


\subsubsection{Swallowtails}\label{subsec:swallowtails}

We have seen that, generically, a non-cuspidal singularity is a swallowtail.
In this part, for sufficiently small non-zero $t$, we want to identify
swallowtails using the Implicit Function Theorem.  The strategy is the
following:

We first remove the trivial solutions $t=0$ by considering the function
\begin{equation}\label{eq:Atilde}
 	\widetilde\cA(t, \theta) := \im\cA(t, \theta)/t^m,
\end{equation}
which should extend to $t=0$ with the values $\widetilde\cA(0, \theta)$ that is
not identically $0$.  Of course, this is only possible if $\im\cA(t, \theta)$
itself is not identically $0$.  That is if the singularity is not cone-like.
Then $\widetilde\cA(0, \theta)$ could only have finitely many zeros.  At a
simple zero $\theta_0$, we may apply the Implicit Function Theorem on
$\widetilde\cA$.  More specifically, if
\[
 	\widetilde\cA(0, \theta_0)=0, \qquad \frac{\partial}{\partial\theta}\widetilde\cA(0, \theta_0) \ne 0
\]
for some $\theta_0$, then for sufficiently small $t$, there exists a unique
value for $\theta$ as a function of $t$, such that $\widetilde\cA(t, \theta(t))
= 0$ and $\theta(t)$ extends to $t=0$ with the value $\theta(0) = \theta_0$.
Moreover, $\frac{\partial}{\partial\theta} \widetilde\cA(t, \theta(t)) \ne 0$
for sufficiently small non-zero $t$.  In other words, the singularities are
swallowtails along the curve $\theta=\theta(t)$.  Unfortunately, if $\theta_0$
is a multiple zero of $\widetilde\cA$, we are not able to draw concrete
conclusions on the numbers and types of the singularities in the neighborhood
$(0, \theta_0)$.

Now, the problem reduces to finding $\widetilde\cA$.  If
$\frac{\partial^k}{\partial t^k} \im \cA(0, \theta) \equiv 0$ for all $0 \le k
< m$, then $\frac{\partial^m}{\partial t^m} \im \cA(t, \theta)$ extends to
$t=0$ with the value
\[
  \frac{\partial^m}{\partial t^m} \im \cA(0, \theta) = m! \lim_{t \to 0} \frac{\im\cA(t, \theta)}{t^m}.
\]
Therefore, let $m$ be the smallest integer $k$ such that
$\frac{\partial^k}{\partial t^k} \cA(0, \theta) \not\equiv 0$, then
\[
  \widetilde\cA(t, \theta) = \frac{\im\cA(t,\theta)}{t^m} = \frac{1}{m!} \frac{\partial^m}{\partial t^m} \im\cA(t, \theta) + o(1)
\]
for $t$ in a neighborhood of $0$, and extends to $t=0$ with the value
$\frac{1}{m!}\frac{\partial^m}{\partial t^m} \im\cA(0, \theta)$.

In Section \ref{sec:Partialderivativeof A}, we proved~\eqref{eq:dkAdtk} for
the partial derivatives, which we repeat below
\[
  \frac{1}{m!}\frac{\partial^m \cA}{\partial t^m}(0,\theta)=
  \lim_{t\to 0}\sum_{0\leq n\leq m-1}\frac{1}{(m-n-1)!}\frac{\partial^{m-n-1}}{\partial t^{m-n-1}}(a_ne^{i(n+1)\theta}-b_ne^{-i(n+1)\theta}).
\]
where
\[
  a_n = \frac{1}{2\pi i} \int_{|v|=\varepsilon}\frac{dh_t}{v^{n+1}}, \quad
  b_n = \frac{1}{2\pi i} \int_{|w|=\varepsilon}\frac{dh_t}{w^{n+1}}
\]
are coefficients in the Laurent expansions of $dh_t$ in the $\CC_l$ and
$\CC_{l+1}$, respectively.  Recall that, at $t=0$, $dh_t/dv = g_l = 1/v$ on
$\CC_l$, so
\[
  a_n = \operatorname{Res}_0 g_l^{n+2}, \quad b_n = \operatorname{Res}_0 g_{l+1}^{n+2}.
\]
In particular,
\[
 	\frac{\partial \cA}{\partial t}(0,\theta) = \lim_{t\to 0} (a_0 e^{i\theta} - b_0 e^{-i\theta}).
\]
Note that, at $t=0$, we have
\begin{align*}
 	\overline{a_0} &= \sum_{1 \le j \ne k \le n_l}\frac{c_l}{p_{l,k}-p_{l,j}}-\sum_{1 \le j \le n_{l-1}}\frac{c_{l-1}}{p_{l,k}-p_{l-1,j}},\\
 	b_0 &= \sum_{1 \le j \ne k \le n_l}\frac{c_l}{p_{l,k}-p_{l,j}}-\sum_{1 \le j \le n_{l+1}}\frac{c_{l+1}}{p_{l,k}-p_{l+1,j}},
\end{align*}
so $\overline{a_0} + b_0 = F_{l,k}/c_l$, which vanishes by the balance
condition.  Therefore, $\frac{\partial}{\partial t}\cA(0,\theta) = 2 \re(a_0
e^{i \theta})$ is real, hence $\frac{\partial}{\partial t}\im\cA(0,\theta)=0$.
This implies that $m>1$ in~\eqref{eq:Atilde}.

Then, we must look at the next derivative, namely
\[
 	\frac{1}{2}\frac{\partial^2 \cA}{\partial t^2}(0,\theta) = \lim_{t\to 0} (a_1 e^{2i\theta}-b_1 e^{-2i\theta}) + \lim_{t \to 0} \frac{\partial}{\partial t} (a_0 e^{i\theta}-b_0 e^{-i\theta}).
\]
Note that the node opening process remains the same if we replace $t$ by $-t$,
so $dh_t$, as well as its Laurent coefficients, are even in $t$.  So, the
second limit vanishes in the formula above.  The imaginary part of the first
limit equals $R^{(1)}(\theta)$ as defined in Section~\ref{sec:mainresult}.  If
it does not vanish, then $m=2$ in~\eqref{eq:Atilde}, and
$\widetilde\cA(0,\theta)$ is given by a shifted sine function of period $\pi$.
We then conclude that
\begin{proposition}\label{prop:swallowtails}
  If $R^{(1)}(\theta) \not\equiv 0$ at $t=0$, there are four non-cuspidal
  singularities around the neck for sufficiently small non-zero $t$, they are
  all swallowtails and, as $t \to 0$, the differences between the angles
  $\theta$ of neighboring swallowtails tend to $\pi/2$.  In other words, these
  swallowtails tend to be evenly distributed as $t \to 0$.
\end{proposition}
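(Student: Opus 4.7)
The plan is to apply the Implicit Function Theorem to the normalized function $\widetilde{\cA}(t,\theta) := \im\cA(t,\theta)/t^2$. By the discussion leading up to the statement, the hypothesis $R^{(1)} \not\equiv 0$ forces $m = 2$ in~\eqref{eq:Atilde}, and $\widetilde{\cA}$ extends real analytically to $t = 0$ with value $\widetilde{\cA}(0,\theta) = \tfrac{1}{2}\frac{\partial^2 \im\cA}{\partial t^2}(0,\theta)$, which after discarding the vanishing parity term equals $R^{(1)}(\theta)$ up to a multiplicative normalization.

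First, I would pin down the shape of $R^{(1)}(\theta)$. Writing $\overline{\operatorname{Res}_{p_{l,k}}\omega_l^{3}/dz} = u + iv$ and $\operatorname{Res}_{p_{l,k}}\omega_{l+1}^{3}/dz = p + iq$ (the even $l$ case is analogous), a direct expansion of~\eqref{eqn:R} gives
\[
  R^{(1)}(\theta) = (u+p)\sin 2\theta + (v-q)\cos 2\theta = K\sin(2\theta + \phi_{0})
\]
with $K = \sqrt{(u+p)^{2}+(v-q)^{2}}$ and some phase $\phi_{0}$. The hypothesis $R^{(1)} \not\equiv 0$ yields $K > 0$, so $R^{(1)}$ is a genuine shifted sine of period $\pi$ and has exactly four simple zeros $\theta_{0}^{0}, \theta_{1}^{0}, \theta_{2}^{0}, \theta_{3}^{0} \in [0, 2\pi)$, consecutively spaced by $\pi/2$.

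Next, at each simple zero $\theta_{j}^{0}$ we have $\partial_{\theta}\widetilde{\cA}(0,\theta_{j}^{0}) \ne 0$, so the Implicit Function Theorem furnishes a unique real-analytic function $t \mapsto \theta_{j}(t)$, defined on a neighborhood of $0$, with $\theta_{j}(0) = \theta_{j}^{0}$ and $\widetilde{\cA}(t,\theta_{j}(t)) \equiv 0$. Continuity of $\partial_{\theta}\widetilde{\cA}$ then forces $\partial_{\theta}\widetilde{\cA}(t,\theta_{j}(t)) \ne 0$ for small $t$, and multiplying by $t^{2}$ gives $\partial_{\theta}\im\cA(t,\theta_{j}(t)) \ne 0$ for $t$ small and non-zero. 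Combined with $\re\cA(0,\theta) = -c_{l} \ne 0$ established in Section~\ref{sec:natureofSIngularties} and with $\im\cA(t,\theta_{j}(t)) = 0$ by construction, these are precisely the defining conditions of a swallowtail singularity.

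The main obstacle, and the step that requires care, is to rule out any \emph{other} non-cuspidal singularities around the waist for small $t$: the IFT delivers only local uniqueness near each $\theta_{j}^{0}$. I would close this gap by a compactness argument. Fix pairwise disjoint open neighborhoods $U_{j} \ni \theta_{j}^{0}$ in $[0, 2\pi)$. On the compact complement $\mathcal{K} := [0, 2\pi] \setminus \bigcup_{j} U_{j}$, the continuous function $|\widetilde{\cA}(0,\cdot)|$ attains a positive minimum $\delta > 0$. Uniform continuity of $\widetilde{\cA}$ on a compact $(t,\theta)$-box then yields $|\widetilde{\cA}(t,\theta)| \ge \delta/2$ on $\{|t| \le t_{0}\} \times \mathcal{K}$ for some $t_{0} > 0$, so $\im\cA$ has no zeros there. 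Hence for $|t| < t_{0}$ the non-cuspidal singularities around the neck are exactly the four branches $\theta_{j}(t)$, and since consecutive $\theta_{j}^{0}$ differ by $\pi/2$, the angular gaps between neighboring swallowtails tend to $\pi/2$ as $t \to 0$, which is the asserted even distribution.
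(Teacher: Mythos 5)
Your proposal is correct and follows essentially the same route as the paper: normalize $\im\cA$ by $t^{2}$, identify its $t=0$ limit with the shifted sine $R^{(1)}(\theta)$ of period $\pi$, and apply the Implicit Function Theorem at its four simple zeros to produce the four swallowtail branches. The only additions are details the paper leaves implicit (the explicit computation that $R^{(1)}$ is a genuine shifted sine, and the compactness argument ruling out further zeros away from the four branches), which tighten rather than change the argument.
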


Otherwise, if $R^{(1)}(\theta) \equiv 0$ at $t=0$, we must look at higher order
derivatives of $\cA$ and continue the analysis.  But things become
significantly more complicated, mainly because we don't have control over
even-order derivatives of the $a_n$ and $b_n$.

\subsubsection{Symmetries}\label{sec:symmetries}

We can say more about the singularities if symmetries are imposed to the maxfaces. 

\begin{proposition}\label{prop:rotations}
  Assume that the configuration has a rotational symmetry of order $r>1$ and
  the neck of interest is at the rotation center.  If $R^{(r-1)}(\theta)
  \not\equiv 0$, then there are $2r$ non-cuspidal singularities around the neck
  for sufficiently small non-zero $t$, they are all swallowtails and, as $t \to
  0$, the differences between the angles $\theta$ between neighboring
  swallowtails tend to $\pi/r$.  In other words, these swallowtails tend to be
  evenly distributed as $t \to 0$.
\end{proposition}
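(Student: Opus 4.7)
The plan is to mirror the Implicit Function Theorem argument used in the proof of Proposition~\ref{prop:swallowtails}, but to exploit the rotational symmetry of order $r$ to push the Taylor expansion of $\cA(t,\theta)$ in $t$ up to order $r$, at which point the leading imaginary part will turn out to be a non-trivial multiple of $R^{(r-1)}(\theta)$.

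The first step is to translate the rotational symmetry of the configuration into a symmetry of the Weierstrass data on $\Sigma_t$ that persists for all small $t$.  Since the ambient rotation fixes the neck $(l,k)$ and the Gauss map transforms by multiplication with a primitive $r$-th root of unity $\zeta = e^{2\pi i/r}$, the local coordinate $v = 1/g_l$ at $a_{l,k}$ transforms as $v \mapsto \zeta^{-1} v$.  Because $dh_t$ is invariant under this automorphism, its Laurent coefficients $a_n(t), b_n(t)$ in the annulus around the neck must vanish identically as functions of $t$ whenever $n+1 \not\equiv 0 \pmod r$.  In particular, $a_n(t) \equiv b_n(t) \equiv 0$ for $0 \le n \le r-2$, and the first non-trivial coefficients are $a_{r-1}$ and $b_{r-1}$.

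Next I would feed this vanishing into the master formula~\eqref{eq:dkAdtk}: for each $1 \le m \le r-1$, every term involves only $a_n, b_n$ with $0 \le n \le m-1 \le r-2$, which all vanish identically in $t$, so $\partial^m_t \cA(0,\theta) \equiv 0$ for $m < r$. For $m=r$ only the index $n=r-1$ survives, yielding
\[
 \frac{1}{r!}\frac{\partial^r \cA}{\partial t^r}(0,\theta) = a_{r-1}(0)\, e^{i r \theta} - b_{r-1}(0)\, e^{-i r \theta}.
\]
Using the limit form~\eqref{eq:heightdiff} of $dh_t$ together with the identification $dh_t/dv = g_l$ at $t=0$, one then matches $a_{r-1}(0)$ and $b_{r-1}(0)$ with $\overline{\operatorname{Res}_{p_{l,k}}\omega_l^{r+1}/dz}$ and $\operatorname{Res}_{p_{l,k}}\omega_{l+1}^{r+1}/dz$ (for $l$ odd; swap conjugations for $l$ even), so that the imaginary part is precisely a non-zero real multiple of $R^{(r-1)}_{l,k}(\theta)$ as defined in~\eqref{eqn:R}.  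Under the hypothesis $R^{(r-1)} \not\equiv 0$, this is a non-trivial real linear combination of $\sin(r\theta)$ and $\cos(r\theta)$, hence of the form $C\sin(r(\theta-\theta_0))$ with $C \ne 0$, which has exactly $2r$ simple zeros in $[0,2\pi)$ separated by $\pi/r$.

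The final step is to conclude via the Implicit Function Theorem, exactly as in Proposition~\ref{prop:swallowtails}: the function $\widetilde\cA(t,\theta) := \im\cA(t,\theta)/t^r$ extends real analytically to $t=0$ with value a non-zero multiple of $R^{(r-1)}_{l,k}(\theta)$, and at each of its $2r$ simple zeros in $\theta$ the IFT produces a unique real-analytic curve $\theta = \theta_k(t)$ along which $\partial_\theta \widetilde\cA \ne 0$ for small non-zero $t$, making the non-cuspidal singularities along it swallowtails; as $t \to 0$ the $\theta_k(t)$ converge to the evenly spaced limit zeros, so the swallowtails become evenly distributed with angular gaps tending to $\pi/r$.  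The main obstacle I anticipate is the first step, namely to carefully lift the rotational symmetry of the configuration to a holomorphic automorphism of $\Sigma_t$ under which $dh_t$ is genuinely invariant for every small $t$, so that the vanishing of $a_n(t), b_n(t)$ for $n \not\equiv -1 \pmod r$ is an identity in $t$ rather than merely a statement at $t=0$.  A secondary technical point is tracking the complex conjugations correctly when matching $a_{r-1}(0)$ and $b_{r-1}(0)$ with the residues appearing in~\eqref{eqn:R}.
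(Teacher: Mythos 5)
Your proposal is correct and takes essentially the same route as the paper's proof: the rotational symmetry forces $v\,dh_t/dv$ to be a function of $v^r$, so $a_n \equiv b_n \equiv 0$ identically in $t$ for $0 \le n \le r-2$, whence the first nonvanishing term in~\eqref{eq:dkAdtk} occurs at order $r$ and reduces to $a_{r-1}e^{ir\theta}-b_{r-1}e^{-ir\theta}$, whose imaginary part is $R^{(r-1)}(\theta)$, a shifted sine of period $2\pi/r$, after which the Implicit Function Theorem argument of Proposition~\ref{prop:swallowtails} applies verbatim. If anything, your exponent $m=r$ in~\eqref{eq:Atilde} is the consistent choice (matching $m=2$ for $R^{(1)}$ in the generic case), whereas the paper's own proof writes ``$m\ge r-1$'' and ``$a_0=b_1=\cdots$'', which appear to be off-by-one slips of exactly the kind your final paragraph guards against.
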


\begin{proof}
  Under the assumed symmetry, $vf=vdh_t/dv$ is a function of $v^r$, hence $a_0
  = b_1 = \cdots = a_{r-2} = b_{r-2} = 0$ for all $t$.  then $m \ge r-1$
  in~\eqref{eq:Atilde} and the equality holds if $R^{(r-1)}(\theta) \not\equiv
  0$ at $t=0$.  In the case of equality, $\widetilde\cA(0,\theta)$ is given by
  a shifted sine function of period $2\pi/r$.
\end{proof}

\begin{proposition}\label{prop:verticalref}
  Assume that the configuration has a vertical reflection plane that cuts
  through the neck of interest.  Then, the singularity around the neck that is
  fixed by the reflection is non-cuspidal.
\end{proposition}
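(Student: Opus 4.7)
The plan is to exploit uniqueness of the Weierstrass data to force a reality condition on $dh_t$ that makes $\cA(t,\theta)$ real at the reflection-fixed points on the singular circle.

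First, after applying an isometry of $\EE_1^3$, I would take the vertical reflection plane to be $\{x_2=0\}$. A direct computation from~\eqref{eq:maxWeierstrass} shows that this reflection of a maxface corresponds to an anti-holomorphic involution $\sigma$ of the Riemann surface satisfying $g\circ\sigma=\overline{g}$ and $dh\circ\sigma=\overline{dh}$, and acting by $z\mapsto\bar z$ on each $\CC_l$. Because the neck $(l,k)$ of interest lies on the reflection plane, a $\sigma$-invariant gauge fixing makes its limit position $p_{l,k}$ and its puncture $a_{l,k}$ real, so the local coordinate $v=1/g_l$ around the neck satisfies $v\circ\sigma=\bar v$. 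The fixed points of $\sigma$ on the singular circle $|v|=t$ are therefore exactly $v=\pm t$, corresponding to $\theta\in\{0,\pi\}$.

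Next, I would promote these reality relations from a guess about the symmetry to an actual conclusion by invoking the uniqueness in the construction of Section~\ref{sec:WeierstrassData} and Appendix~\ref{sec:ift}. Concretely, the data $(g_t, dh_t)$ is determined by the Implicit Function Theorem once a gauge is fixed; if the gauge is chosen $\sigma$-invariantly, then the conjugate-pullback $(\overline{g_t\circ\sigma}, \overline{dh_t\circ\sigma})$ solves the same divisor and period problem for the same symmetric configuration, so by uniqueness it must coincide with $(g_t, dh_t)$. This yields $g_t\circ\sigma=\overline{g_t}$ and $dh_t\circ\sigma=\overline{dh_t}$ for all small $t$. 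Writing $dh_t=\sum_n a_n(t)\,v^n\,dv$ in the local coordinate $v$ at the fixed neck, this forces every Laurent coefficient $a_n(t)$ to be real.

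The conclusion is then immediate. From
\[
  \cA(t,\theta) = t e^{i\theta}\, f_t(t e^{i\theta}) = \sum_{n\in\ZZ} a_n(t)\, t^{n+1}\, e^{i(n+1)\theta},
\]
every term is real at $\theta\in\{0,\pi\}$, so $\cA(t,\theta)\in\RR$ and hence $\im\cA(t,\theta)=0$. By the criterion recalled in Section~\ref{sec:Governingfunction}, the singular points fixed by the reflection are non-cuspidal. I expect the main obstacle to be the bookkeeping in the symmetry step, namely deriving the transformation law for $(g,dh)$ carefully from the maxface representation~\eqref{eq:maxWeierstrass} (which differs from the minimal-surface case by a sign in the middle component) and adapting the uniqueness argument of Appendix~\ref{sec:ift} to a $\sigma$-invariant gauge; once this is settled, the rest of the proof is a one-line consequence of the Laurent expansion of $\cA$.
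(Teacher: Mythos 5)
Your proposal is correct and follows essentially the same route as the paper: the reflection symmetry forces the height differential to be real on the fixed line in the coordinate $v$, hence all Laurent coefficients $a_n(t)$ are real, hence $\im\cA(t,\theta)=0$ at the fixed angles and the singularity is non-cuspidal. The only difference is that you spell out the step the paper leaves implicit (deriving the reality of $dh_t$ from the $\sigma$-invariant gauge and the uniqueness in the Implicit Function Theorem construction), which is a useful but not essentially different elaboration.
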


\begin{proof}
  We may further assume that the singular point $p$ fixed by the reflection is
  given by $v(p) = t$.  Then, the height differential is real on the real line
  under the local coordinate $v$.  So, all the Laurent coefficients are real no
  matter the value of $t$.  In particular, $\im\cA(t, 0)=0$, so $p$ is a
  non-cuspidal singularity.
\end{proof}

\begin{remark}
  For sufficiently small non-zero $t$, a singularity around the neck that is
  fixed by a vertical reflection could be a generalized $A_k$ singularity only
  for odd $k$.
\end{remark}

\begin{remark}
  By the two propositions above, if a configuration has a dihedral symmetry of
  order $2r$ and the neck of interest is at the symmetry center, then there are
  $2r$ swallowtails around the neck with the same dihedral symmetry.
\end{remark}

\begin{proposition}\label{prop:horizontalref}
  Assume that the configuration of necks has a horizontal reflection plane that
  cuts through the neck of interest.  Then, the singular curve around the neck
  is mapped to a conelike singularity.
\end{proposition}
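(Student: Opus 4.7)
My strategy is to lift the horizontal reflection symmetry of the configuration to an anti-holomorphic involution $\sigma$ of the node-opened Riemann surface $\Sigma_t$, derive from it the transformation law $\sigma^* dh_t = -\overline{dh_t}$, evaluate the resulting functional identity on the singular curve $\{|v|=t\}$ around the fixed neck, and read off that $\cA(t,\theta)$ is real for all $\theta$. By the criterion recalled after Theorem~\ref{thm:noncuspidalvariety}, $\im \cA \equiv 0$ forces the singular curve to collapse to a point, which is by definition a conelike singularity.

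Concretely, at $t=0$ the horizontal reflection interchanges $\CC_l$ with $\CC_{l+1}$ (and pairs up the remaining sheets symmetrically about the reflecting plane) while fixing the neck $(l,k)$ of interest, so it exchanges the local coordinates $v = v_{l,k}$ and $w = w_{l,k}$ of Section~\ref{sec:Gaussmap}. Because a reflection in $\EE_1^3$ reverses orientation, the lifted involution $\sigma$ of $\Sigma_t$ is anti-holomorphic, and compatibility with the gluing $vw=t^2$ forces
\[
	\sigma(v) = t^2/\bar v.
\]
In particular, $\sigma$ fixes the singular circle $|v|=t$ pointwise.

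Granting $\sigma^* dh_t = -\overline{dh_t}$ (reflecting the fact that a horizontal reflection negates $x_3 = \re\int dh_t$), writing $dh_t = f_t(v)\,dv$ and unwinding the pullback with $\sigma^*(dv) = -(t^2/\bar v^2)\,d\bar v$ yields the functional identity
\[
	\overline{f_t(t^2/\bar v)} = \frac{v^2}{t^2}\, f_t(v).
\]
On the singular curve $v = te^{i\theta}$ we have $t^2/\bar v = v$, so this collapses to $\overline{f_t(v)} = e^{2i\theta} f_t(v)$, and substituting into $\cA(t,\theta) = te^{i\theta} f_t(te^{i\theta})$ gives $\overline{\cA(t,\theta)} = te^{-i\theta}\cdot e^{2i\theta} f_t(te^{i\theta}) = \cA(t,\theta)$. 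Hence $\im \cA \equiv 0$ along the entire singular curve, as required.

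The main obstacle is the first step: upgrading the symmetry of the $t=0$ configuration to an honest involution of the Weierstrass data at $t>0$. The Implicit Function Theorem used in Appendix~\ref{sec:ift} provides the required uniqueness: the reflected Weierstrass parameters $\alpha,\beta,r,a,b,R$ satisfy the same divisor, period, and normalization conditions and remain in the neighborhood where the IFT solution is unique, so they must coincide with the originals, giving an honest automorphism $\sigma$. Checking the transformation rule $\sigma^* dh_t = -\overline{dh_t}$ then amounts to tracking how the $x_3$-component of the Weierstrass integrand transforms under an anti-holomorphic symmetry corresponding to $x_3 \mapsto -x_3$; once this is pinned down, everything else reduces to the elementary local computation above.
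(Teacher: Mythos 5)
Your proof is correct, and it rests on the same key idea as the paper's own proof: the horizontal reflection lifts to an antiholomorphic involution of $\Sigma_t$ that fixes the singular circle pointwise and satisfies $\sigma^*dh_t = -\overline{dh_t}$ (your formula $\sigma(v) = t^2/\bar v$ is precisely the correct $v$-coordinate expression of the involution the paper denotes $\iota$), and this symmetry forces $\im\cA \equiv 0$, hence a cone-like singularity by the criterion of Section~\ref{subsec:noncuspidal}. Where you genuinely differ is the passage from the involution to $\im\cA \equiv 0$. The paper converts $\iota^*(dh) = -\overline{dh}$ into the Laurent-coefficient relations $a_n + \overline{b_n} = 0$ for all $n$ and all $t$, feeds these into the derivative formula~\eqref{eq:dkAdtk}, so that every $t$-derivative of $\cA$ at $t=0$ is real, and then invokes real-analyticity of $\cA$ in $t$. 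Your route is more elementary and more direct: since $\sigma$ fixes the circle $|v|=t$ pointwise, the functional identity $\overline{f_t(t^2/\bar v)} = (v^2/t^2)\,f_t(v)$ can be evaluated right on that circle, giving $\overline{\cA} = \cA$ with no Laurent expansion, no derivative formula, and no analyticity argument; it also yields the conclusion for each fixed small $t$ separately, rather than through a Taylor expansion at $t=0$. What the paper's coefficient formulation buys in exchange is seamless integration with the machinery of Sections~\ref{sec:Partialderivativeof A} and~\ref{sec:natureofSIngularties}, which is reused verbatim in Propositions~\ref{prop:rotations} and~\ref{prop:verticalref}. A further point in your favor: the paper simply asserts the existence of the involution, whereas you correctly flag its construction for $t>0$ as the main obstacle and sketch the right argument (equivariance of the divisor and period equations under the reflection, plus uniqueness in the Implicit Function Theorem of Appendix~\ref{sec:ift}); to make that fully rigorous one should also check that the normalizations fixed in the appendix can be chosen reflection-compatibly, but that verification is routine.
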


\begin{proof}
  Under the assumed symmetry, the singular curve is pointwise fixed by an
  antiholomorphic involution $\iota : v \mapsto w = t^2/v$ of the Riemann
  surface $\Sigma_t$, and $\iota^*(dh) = -\overline{dh}$.  In other words, we
  have $a_n + \overline{b_n} = 0$ for all $n \in \ZZ$ no matter the
  value of $t$.  As a consequence, the partial derivatives of $\cA$ over $t$
  are all real, so $\im\cA \equiv 0$.
\end{proof}

\appendix

\section{Using the Implicit Function Theorem} \label{sec:ift}

In Section \ref{sec:WeierstrassData}, we have proposed the Weierstrass data,
namely a Riemann surface $\Sigma$,  Gauss map $g$, and a holomorphic 1- form
$dh$, with undetermined parameters $\alpha_{l,k}$, $\beta_{l,k}$, $a_{l,k},
b_{l,k}, r_{l,k}, R_l,$ and $t$.  In this section will prove that for
sufficiently small $t$, we can find values for all the parameters, as smooth
functions of $t$, such that the triplet $(\Sigma, g, dh)$ becomes a Weierstrass
data for a maxface.   As the argument is similar to that of
\cite{traizet2002e}, we will only provide a sketch and highlight the necessary
changes.

We want to find parameters $X=(t, a, b, \alpha, \beta, r, R)$ that solve the
divisor conditions, period conditions, and regularity conditions.  All
parameters vary in a neighborhood of their initial values at $t=0$, denoted by
$X^\circ$.  Given a balanced configuration $(l, p, c)$, we will see that

\begin{gather*}
 	t^\circ = 0, \quad R_l^\circ = Q_l,\\
 	\forall 1 \le k \le n_l, r_{l,k}^\circ = -\alpha_{l,k}^\circ = \beta_{l,k}^\circ = c_l\\
 	\forall 1 \le k \le n_l, a_{l,k}^\circ = \overline{b_{l,k}^\circ} = \begin{cases}
  	\overline{p_{l,k}}, & l \text{ odd}\\
  	p_{l,k}, & l \text{ even}.
 	\end{cases}
\end{gather*}
The argument in \cite{traizet2002e} applies, word by word, to prove the
following

\begin{proposition}[Divisor condition]
  For $(t, a, b, r, R)$ in a neighborhood of their initial values, there exist
  unique values for $\alpha$ and $\beta$, depending analytically on $(t, a, b,
  r, R)$, such that the divisor conditions are satisfied.  Moreover, at $t=0$,
  we have $-\alpha_{l,k} = \beta_{l,k} = r_{l,k}$. 
\end{proposition}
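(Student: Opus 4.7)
The plan is to apply the Implicit Function Theorem with $(\alpha,\beta)$ as unknowns and $(t,a,b,r,R)$ as parameters varying in a neighborhood of their initial values. The first thing to verify is that at the base point $X^\circ$ the divisor condition is already satisfied. With $\alpha_{l,k}^\circ = -c_l$, $\beta_{l-1,k}^\circ = c_{l-1}$, and $r_{l,k}^\circ = c_l$, the meromorphic function
\[
g_l^\circ(z) = \sum_{k=1}^{n_l}\frac{-c_l}{z-a_{l,k}} + \sum_{k=1}^{n_{l-1}}\frac{c_{l-1}}{z-b_{l-1,k}}
\]
coincides with the limit~\eqref{eq:heightdiff} of $dh_t/dz$ on $\CC_l$. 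Hence $dh^\circ|_{\CC_l} = g_l^\circ\, dz$ on each sheet, so $g_l^\circ$ and $dh/dz$ have identical zero and pole divisors away from the nodes, and the divisor condition $(g)_0 - (g)_\infty = (dh/dz)_0$ holds trivially on $\Omega_l$. Matching residues at the nodes then yields the claimed identity $-\alpha_{l,k}^\circ = \beta_{l,k}^\circ = r_{l,k}^\circ$.

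Next I would encode the divisor condition as a real-analytic map $\Phi(t,a,b,\alpha,\beta,r,R)$ into a finite-dimensional space, exactly as in~\cite{traizet2002e}. Away from the necks this amounts to requiring that each zero of $g_l$ in $\Omega_l$ is a zero of $dh/dz$ of the same order; in the neck regions (after the identification $v_{l,k} w_{l,k} = t^2$) and at the punctures $\infty_l$ one imposes that $g\,dh$ and $g^{-1}\,dh$ extend holomorphically with the behavior dictated by the catenoid or planar end type. A dimension count, identical to the one in~\cite{traizet2002e}, shows that the number of scalar equations in $\Phi = 0$ matches the number of free parameters in $(\alpha,\beta)$.

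The crux of the argument is the invertibility of the partial derivative $\partial\Phi/\partial(\alpha,\beta)$ at $X^\circ$. At $t = 0$ the sheets decouple, so the linearization block-diagonalizes across levels, and on each level reduces to the standard fact that a rational function on $\mathbb{CP}^1$ with prescribed pole locations is determined by its residues together with the normalization at infinity. This yields a linear isomorphism, and the Implicit Function Theorem then produces unique real-analytic functions $\alpha(t,a,b,r,R)$ and $\beta(t,a,b,r,R)$ in a neighborhood of $X^\circ$. The main obstacle is this invertibility check; however, since the Weierstrass data and the divisor condition for the maxfaces considered here are structurally identical to those for the minimal surfaces treated in~\cite{traizet2002e}, the linear algebra transfers verbatim and no new work is required.
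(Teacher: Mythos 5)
Your verification of the base point is correct and matches the paper's initial data: at $t=0$ the limit~\eqref{eq:heightdiff} of $dh$ with $r_{l,k}=c_l$ coincides with $g_l^\circ\,dz$, so the divisor condition holds there and $-\alpha^\circ_{l,k}=\beta^\circ_{l,k}=r^\circ_{l,k}$. (For what it is worth, the paper itself offers no argument beyond the sentence that the proof in~\cite{traizet2002e} applies word for word, so you are reconstructing the argument rather than competing with a written one.)

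The genuine gap is in your crux step, the invertibility of $\partial\Phi/\partial(\alpha,\beta)$, and the ``standard fact'' you invoke does not fill it. Count the equations: on $\Sigma_t$ the form $dh$ has $2(\mathrm{genus})-2+L=2N-L$ zeros, so the divisor condition amounts to $n_l+n_{l-1}-1$ complex conditions per sheet (that $g_l$ vanish at the zeros of $dh$ lying in $\Omega_l$), i.e.\ $2N-L$ conditions in total against the $2N$ unknowns $(\alpha,\beta)$; your asserted dimension match fails by $L$. Correspondingly, the linearization at $t=0$ has a kernel: at the base point the conditions on sheet $l$ say that the rational function $g_l$, with simple poles at the $a_{l,k}$ and $b_{l-1,k}$, vanishes at the $n_l+n_{l-1}-1$ zeros of $dh^0/dz$, and the full solution set of this is the one-complex-parameter family $g_l=\lambda_l\,dh^0/dz$, $\lambda_l\in\CC^*$, one scaling per sheet. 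Indeed the point-matching divisor conditions are invariant under $g_l\mapsto\lambda_l g_l$ at $t=0$ (for $t\neq 0$ this scaling is no longer harmless, because it changes the gluing coordinates $v_{l,k}=1/g_l$, $w_{l,k}=1/g_{l+1}$ and hence $\Sigma_t$ and $dh$ themselves --- which is exactly why the degeneracy sits at the IFT base point and cannot be ignored). So no formulation consisting only of the zero-matching conditions can have a \emph{unique} solution, and the Implicit Function Theorem cannot be applied as you state it. The fact you cite --- a rational function with prescribed poles is determined by its residues together with a normalization at infinity --- is just the definition of the map $(\alpha,\beta)\mapsto g_l$; it is not the linearization of the divisor conditions. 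What is missing is precisely the normalization: $L$ additional equations (e.g.\ matching the residues of $g_l\,dz$ with those of $dh$ at the nodes, or the leading coefficients at $\infty_l$), which is exactly what the clause $-\alpha_{l,k}=\beta_{l,k}=r_{l,k}$ at $t=0$ encodes (it selects $\lambda_l=1$). A correct proof must state these supplementary equations as part of $\Phi$, check they are satisfied at the base point, and then verify that the enlarged square system has invertible Jacobian --- the kernel computation above shows this is where all the content lies, and your proposal leaves it unaddressed.
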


For $1 \le l < L$, $1 < k \le n_l$, let $\Gamma_{l,k}$ be a closed curve that
starts in $\Omega_l$, travels first through the neck $(l,1)$ to $\Omega_{l+1}$,
then through the neck $(l, k)$ back to $\Omega_l$, and finally close itself.
See~\cite{traizet2002e} for formal definitions of these curves.  For $1
\le l < L$ and $1 < k \le n_l$, the curves $\Gamma_{l,k}$ and the previously
defined $\gamma_{l,k}$ form a homology basis.  So, we only need to close
periods on these curves to solve the period conditions.

Recall that the vertical periods are already closed when defining the height
differential $dh$.  In the following proposition, we need to switch to the
parameter $\tau$ given by $t = \exp(-1/\tau^2)$.  Again, The argument in
\cite{traizet2002e} applies word by word.  The key point is that
$-\tau^{-2}\int_{\Gamma_{l,k}} dh$ extends to a smooth function at $\tau =0$
with the value $2(r_{l,k} - r_{l,1})$.

\begin{proposition}[Vertical periods]
  Assume that $(\alpha, \beta)$ are given by the previous proposition.  For
  $(\tau, a, b, R)$ in a neighborhood of their initial values, there exists
  unique values for $r$, depending smoothly on $(\tau, a, b, R)$, such that the
  vertical period condition~\eqref{eq:vperiod} are satisfied over the curves
  $\Gamma_{l,k}$, $1 \le l < L$ and $1 < k \le n_l$.  Moreover, at $\tau=0$, we
  have $r_{l,k} = c_l$ for all $1 \le k \le n_l$ and $1 \le l < L$, where $c_l$
  are defined from $R_l$ by $c_0 = c_L = 0$ and $c_{l-1} n_{l-1} - c_l n_l =
  R_l$. 
\end{proposition}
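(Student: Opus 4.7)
The plan is to apply the Implicit Function Theorem to a map encoding the vertical-period condition on the curves $\Gamma_{l,k}$, $1 \le l < L$ and $1 < k \le n_l$, after first eliminating the variables $r_{l,1}$ via the residue constraint $\sum_{k} r_{l,k} - \sum_{k} r_{l-1,k} = -R_l$. This constraint telescopes to $\sum_{k} r_{l,k} = -\sum_{m \le l} R_m$, so each $r_{l,1}$ becomes an affine function of $R$ and $(r_{l,k})_{k > 1}$. After this elimination, the remaining unknowns $(r_{l,k})_{k > 1}$ are $N - (L-1)$ real numbers, exactly matching the number of available curves $\Gamma_{l,k}$, so the system is square.

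For each such curve I would define
$$
P_{l,k}(\tau, a, b, R, r) := -\tau^{-2}\, \re \int_{\Gamma_{l,k}} dh_t,
$$
with $(\alpha, \beta)$ already substituted using the previous proposition. By the asymptotic identity announced just before the statement, and by the node-opening estimates of \cite{traizet2002e}, $P_{l,k}$ extends smoothly to $\tau = 0$ with value $2(r_{l,k} - r_{l,1})$. At the initial values $r_{l,k}^\circ = c_l$ this yields $P_{l,k}(0, a^\circ, b^\circ, R^\circ, r^\circ) = 0$, so the system $P = 0$ is solved at the base point.

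The Jacobian of $P$ at $\tau = 0$ with respect to the free unknowns $(r_{l,k})_{k > 1}$, after substituting $r_{l,1} = s_l(R) - \sum_{k > 1} r_{l,k}$, is block-diagonal in $l$; the $l$-th block equals $2(I_{n_l - 1} + J_{n_l - 1})$, where $J$ is the all-ones matrix, with determinant $2^{n_l - 1} n_l \ne 0$. The Implicit Function Theorem then furnishes unique smooth $(r_{l,k})_{k > 1}(\tau, a, b, R)$ on a neighborhood of $(0, a^\circ, b^\circ, R^\circ)$ solving the vertical-period conditions on every $\Gamma_{l,k}$, with $r_{l,k}(0, a^\circ, b^\circ, R^\circ) = c_l$. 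Recovering $r_{l,1}$ from the residue constraint then yields the same value $c_l$ at $\tau = 0$.

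The main obstacle is the smooth extension of $P_{l,k}$ to $\tau = 0$, that is, the fact that $\re \int_{\Gamma_{l,k}} dh_t$ equals $2(r_{l,k} - r_{l,1}) \log t$ up to a remainder that is smooth in $\tau$ (equivalently analytic in the gluing parameter $s = t^2$). This rests on Traizet's node-opening analysis: after subtracting the explicit logarithmic singular contribution at each node, which comes solely from the residue of $dh_t$ there, the remaining integral depends analytically on $s$ and on the configuration parameters. Because the defining conditions for $dh_t$ in our maxface construction are formally identical to those in the minimal-surface construction, this estimate transfers verbatim from \cite{traizet2002e}, and no new analysis is required beyond citation.
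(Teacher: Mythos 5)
Your proposal is correct and takes essentially the same route as the paper: the paper's proof consists precisely of the key extension fact you invoke (that the $\tau$-normalized period over $\Gamma_{l,k}$ extends smoothly to $\tau=0$ with value $2(r_{l,k}-r_{l,1})$, imported from Traizet's node-opening analysis) followed by the Implicit Function Theorem, with all remaining details deferred to \cite{traizet2002e}. The steps you make explicit --- eliminating $r_{l,1}$ via the telescoped residue constraint, the dimension count $N-(L-1)$, and the invertibility of the block-diagonal Jacobian $2(I_{n_l-1}+J_{n_l-1})$ with determinant $2^{n_l-1}n_l$ --- are exactly the content of the argument the paper cites rather than reproduces.
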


The proof for the following step differs from minimal
surfaces~\cite{traizet2002e} only by a few signs.  This slight
difference comes from the sign change in the horizontal period
condition~\eqref{eq:hperiod}.  We will give a sketch to point out the
difference.

\begin{proposition}[Horizontal periods]
  Given a balanced and rigid configuration $(p, Q)$ such that the map $Q \to W$
  has rank $1$.  Assume that $(\alpha, \beta, r)$ are given by previous
  propositions.  For $\tau$ in a neighborhood of $0$, there exists unique
  values for $a$, $b$, and $R$, depending smoothly on $\tau$, such that $\sum_l
  R_l = 0$ and the horizontal period condition~\eqref{eq:hperiod} are satisfied
  over the curves $\Gamma_{l,k}$ and $\gamma_{l,k}$, $1 \le l < L$ and $1 < k
  \le n_l$.  Moreover, at $\tau=0$, up to a translation in $\CC_l$, we
  have $a_{l,k} = \overline{b_{l,k}} = \overline{p_{l,k}}$ if $l$ is odd, $=
  p_{l,k}$ if $l$ is even, and $R_l = Q_l$. 
\end{proposition}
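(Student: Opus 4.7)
The plan is to apply the Implicit Function Theorem at the initial point $(\tau,a,b,R)=(0,a^\circ,b^\circ,R^\circ)$ to the vector-valued map
\[
  \Phi(\tau,a,b,R) = \Bigl(\bigl(P(\Gamma_{l,k})\bigr),\; \bigl(P(\gamma_{l,k})\bigr),\; \sum\nolimits_{l} R_{l}\Bigr),
\]
indexed by $1 \le l < L$ and $1 < k \le n_l$, where $P(\gamma) := \int_\gamma g\,dh + \overline{\int_\gamma g^{-1}\,dh}$ is the left-hand side of~\eqref{eq:hperiod} and $(\alpha,\beta,r)$ are already supplied as smooth functions of $(\tau,a,b,R)$ by the previous two propositions. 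I must show that $\Phi$ extends smoothly to $\tau=0$, vanishes at $(0,a^\circ,b^\circ,R^\circ)$, and has invertible differential with respect to $(a,b,R)$ at this point---after quotienting out a complex translation in each $\CC_l$ by normalizing one position per plane. The IFT then delivers the unique smooth solution $\bigl(a(\tau),b(\tau),R(\tau)\bigr)$.

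For the vanishing at $\tau=0$, the leading contribution to $P(\gamma_{l,k})$ comes from residues in the gluing annulus around the neck $(l,k)$. Using that $dh^\circ$ has residues $-c_l$ at $a^\circ_{l,k}$ and $c_{l-1}$ at $b^\circ_{l-1,k}$, and that $g^\circ = tg_l$ or $1/(tg_l)$ depending on the parity of $l$, a direct residue computation---paralleling \cite{traizet2002e} but with the plus sign of~\eqref{eq:hperiod} replacing the minus---produces, up to a conjugation determined by the parity of $l$, a nonzero scalar multiple of the force $F_{l,k}$. The parity-dependent conjugation pattern $a^\circ_{l,k}=\overline{p_{l,k}}$ (for $l$ odd) or $=p_{l,k}$ (for $l$ even) is precisely what makes this identification go through. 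Hence the balance condition $F_{l,k}=0$ gives the vanishing of the $P(\gamma_{l,k})$ block; an analogous residue computation along $\Gamma_{l,k}$ kills those components as well, while $\sum_l R^\circ_l = \sum_l Q_l = 0$ is already forced by the solvability of $Q_l = n_{l-1}c_{l-1}-n_l c_l$.

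For the invertibility of the differential, the partial derivative of $\bigl(P(\gamma_{l,k})\bigr)$ with respect to $(a,b)$ at $\tau=0$ reduces to the differential $dF$ of the force with respect to $p$, which has complex rank $N-2$ by rigidity. The missing $2$-complex-dimensional kernel is the joint translation--scaling symmetry; the translation is absorbed by the phrase ``up to a translation in $\CC_l$'', while the scaling is pinned by the $P(\Gamma_{l,k})$ block together with $\sum_l R_l=0$. The rank-$1$ hypothesis on $Q\mapsto W$ supplies precisely the single remaining direction needed to solve the extra scalar identity $W=0$---which, by the calculation in Section~\ref{sec:mainresult}, is a consequence of the balance of $\gamma$-periods---consistently with $\sum_l R_l=0$.

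The main obstacle is bookkeeping rather than conceptual: one must rerun Traizet's residue and period computations with the sign of the conjugated integral reversed, and check that after the parity-dependent conjugation $p_{l,k}\mapsto \overline{p_{l,k}}$ for $l$ odd, the output is, up to a global conjugation, still the same force $F_{l,k}$ and the same scalar $W$ that govern minimal surfaces. Once this is done, the IFT step is identical to that of~\cite{traizet2002e}, which explains why balanced rigid configurations from the minimal-surface literature transfer verbatim to maxfaces.
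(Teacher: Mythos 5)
Your proposal has the right vanishing computation (residue calculus identifying the limiting $\gamma$-periods with the forces $F_{l,k}$, with the parity-dependent conjugation), but the single joint application of the Implicit Function Theorem fails at two concrete points. First, the map $\Phi$ as you define it does not extend smoothly and nondegenerately to $\tau=0$: the two blocks require \emph{different} rescalings in $t$. The paper works with $t\,P(\Gamma_{l,k})$, which extends smoothly to $\tau=0$ with limit $b_{l,k}-b_{l,1}+\overline{a_{l,1}}-\overline{a_{l,k}}$ (so the unscaled $\Gamma$-block blows up like $1/t$ unless that limit vanishes), and with $t^{-1}P(\gamma_{l,k})$, whose limit is the force-type expression. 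The unscaled $P(\gamma_{l,k})$ is $t$ times a smooth function, hence vanishes identically at $\tau=0$ together with all its partial derivatives in $(a,b,R)$; so the differential of your $\gamma$-block at the base point is zero, not ``$dF$ of rank $N-2$'', and the IFT in your formulation is degenerate. This is why the paper's argument is sequential rather than simultaneous: first $b$ from the rescaled $\Gamma$-periods, then $a$ from the rescaled $\gamma$-periods, finally $R$.

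Second, and more seriously, even after rescaling, the joint differential you claim to be invertible cannot be: rigidity gives the $\gamma$-block complex rank only $N-2$, so after gauge-fixing there remain two complex (four real) period equations at two residual necks $(l_0,1)$ and $(l_0,k_0)$, while the rank-$1$ hypothesis on $Q\mapsto W$ supplies only one useful real direction in $R$ (after $\sum_l R_l=0$). The paper closes this $4$-to-$1$ mismatch not by the IFT but by showing that three of the four leftover real equations hold automatically: $P(\gamma_{l_0,1})+P(\gamma_{l_0,k_0})=0$ by the Residue Theorem, and $\re\bigl(P(\gamma_{l_0,k_0})\int_{\Gamma_{l_0,k_0}}g^{-1}dh\bigr)=0$ by the Riemann Bilinear Relation. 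Only then is one real equation left, encoded by $G=\im\bigl(\sum_{l,k}(-1)^k\overline{p_{l,k}}\,t^{-1}P(\gamma_{l,k})\bigr)$, whose limit at $\tau=0$ is $4\pi W$; the rank-$1$ condition on $Q\mapsto W$ is used precisely to solve $G=0$ for $R$ by the IFT. Your proposal never invokes the Residue Theorem or the bilinear relation, and the phrase ``the scaling is pinned by the $P(\Gamma_{l,k})$ block together with $\sum_l R_l=0$'' does not account for those four real equations; without the two identities the target of your map is too large and no choice of $R$ can make the differential surjective. A minor related misstatement: $W=0$ is a necessary condition for balance of the configuration (it equals $\sum p_{l,k}F_{l,k}$), not ``a consequence of the balance of $\gamma$-periods''; in the proof the implication runs the other way, from $G=0$ and the automatic identities to the vanishing of the last two periods.
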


\begin{proof}[Sketched proof]
  Define the horizontal period along a curve $c$ as
  \[
    P(c) = \overline{\int_c g^{-1} dh} + \int_c g dh.
  \]

  Then $tP(\Gamma_{l,k})$ is extends to a smooth function at $\tau=0$ with the
  values
  \[
    \begin{cases}
      b_{l,k} - b_{l,1} + \overline{a_{l,1}} - \overline{a_{l,k}}, & l \text{ odd};\\
      \overline{b_{l,k}} - \overline{b_{l,1}} + a_{l,1} - a_{l,k}, & l \text{ even}.
    \end{cases}
  \]
  If we normalize $b$ by fixing $b_{l,1} = \overline{a_{l,1}}$, then
  $tP(\Gamma_{l,k})$ vanish at $\tau = 0$ if $b_{l,k} = \overline{a_{l,k}}$.
  As the partial differential of $tP(\Gamma_{l,k})$ with respect to $b$ is a
  linear isomorphism, the parameters $b$ are found by the Implicit Function
  Theorem.

  Using these values of $b$, $t^{-1} P(\gamma_{l,k})$ extends to a smooth
  function at $\tau=0$ with the values
  \[
    \begin{dcases}
      - 4\pi i \Bigg(
        \sum_{1 \le j \ne k \le n_l}\frac{2c_l^2}{a_{l,k} - a_{l,j}}
        -\sum_{j=1}^{n_{l+1}} \frac{c_l c_{l+1}}{a_{l,k}-\overline{a_{l+1,j}}}
        -\sum_{j=1}^{n_{l-1}} \frac{c_l c_{l-1}}{a_{l,k}-\overline{a_{l-1,j}}}
      \Bigg),& l \text{ odd},\\
      4\pi i \Bigg(
        \sum_{1 \le j \ne k \le n_l}\frac{2c_l^2}{\overline{a_{l,k}} - \overline{a_{l,j}}}
        -\sum_{j=1}^{n_{l+1}} \frac{c_l c_{l+1}}{\overline{a_{l,k}}-a_{l+1,j}}
        -\sum_{j=1}^{n_{l-1}} \frac{c_l c_{l-1}}{\overline{a_{l,k}}-a_{l-1,j}}
      \Bigg),& l \text{ even}.
    \end{dcases}
  \]
  They vanish at $t = 0$ if $a_{l,k} = \overline{p_{l,k}}$ where $p$ is from a
  balanced configuration.  Since the configuration is rigid, we may
  re-normalize $a$ by fixing two of the $a$ parameters, then use the Implicit
  Function Theorem to find the remaining $N-2$ $a$ parameters, depending
  smoothly on $t$ that solves $t^{-1} P(\gamma_{l,k}) = 0$ for all but two
  necks.

  It remains to solve $P(\gamma_{l,k}) = 0$ for the remaining two necks.  It is
  necessary that $n_{l_0} > 1$ for some $l_0$; otherwise, the configuration
  would not be balanced unless $N=1$.  So we may assume that the remaining
  necks are labeled by $(l_0, 1)$ and $(l_0,k_0)$, $1 < k_0 \le n_{l_0}$.  The
  relation that $P(\gamma_{l_0,1}) + P(\gamma_{l_0,k_0})=0$ follows from the
  Residue Theorem.  The Riemann Bilinear Relation shows that
  \[
    \re \Big( P(\gamma_{l_0,k_0}) \int_{\Gamma_{l_0,k_0}} g^{-1} dh \Big) = 0.
  \]
  And finally, we study the function
  \[
    G = \im\Big(\sum_{l = 1}^L \sum_{k = 1}^{n_k} (-1)^k \overline{p_{l,k}}t^{-1} P(\gamma_{l,k})\Big).
  \]
  It extends to a smooth function at $\tau = 0$ with the values of $4\pi W$,
  which vanishes because the configuration is balanced.  Since the partial
  differential of $W$ with respect to $R$ is surjective at $t=0$, we may use
  the Implicit Function Theorem to find $R$, depending smoothly on $\tau$ in a
  neighborhood of $0$, such that $\sum R_l = 0$ and $G(\tau, R) = 0$. These
  conclude the proof that $P(\gamma_{l_0,1}) = P(\gamma_{l_0,k_0})=0$.
\end{proof}

We have constructed a family of maximal maps. 
\[
 	(X_1, X_2, X_3): \Sigma_t \to \EE^3_1.
\]
Let $0_k$ be the origin point of $\CC_k$.  With a translation if
necessary, we may assume that $0_k \in \Omega_k$.  With similar computations as
in~\cite{traizet2002e}, one verifies that

\begin{itemize}
 	\item The necks converge to Lorentzian catenoids and, after a scaling by $t$, the
 		limit positions of the necks are $p_{l,k}$.

 	\item The image of $\Omega_k$ is a space-like graph over the horizontal plane
 		and this image stays within a bounded distance from $X_3(0_k) + R_k
 		\log(1+t|x_1 + i x_2|)$.

 	\item $X_3(0_k) - X_3(0_{k+1}) = O(-\log t)$. So, if $Q_k < Q_{k+1}$, then
  	for sufficiently small $t$, we have $R_k \le R_{k+1}$ and the image of
  	$\Omega_k$ is above the image of $\Omega_{k+1}$.
\end{itemize}

The singular set, given by $|v_{l,k}|=t$ and $|w_{l,k}|=t$, is compact in
$\Sigma_t$, and is not included in $\Omega_k$.  We then have proved that the
constructed maximal maps are in fact maxface. Moreover these  are embedded in a
wider sense for sufficiently small $t$ if $Q_1 < Q_2 < \cdots < Q_{L}$.

\bibliography{ref.bib}

\end{document}